\setlist[itemize]{noitemsep} 
\numberwithin{equation}{section}
\theoremstyle{plain}
\newtheorem{theorem}{Theorem}[section]
\newtheorem{lemma}{Lemma}[section]
\newtheorem{proposition}{Proposition}[section]
\newtheorem{corollary}[theorem]{Corollary}
\newtheorem{remark}{Remark}[section]
\newtheorem{definition}{Definition}
\theoremstyle{remark}\newtheorem{assumption}{Assumption}
\theoremstyle{remark}
\DeclareMathOperator*{\argmin}{arg\,min}
\newcommand{\RR}{\mathbb{R}}
\newcommand{\ZZ}{\mathbb{Z}}
\newcommand{\LL}{\mathbb{L}}
\newcommand{\N}{\mathrm{N}}
\renewcommand{\P}{\mathrm{P}}
\newcommand{\E}{\mathrm{E}}
\renewcommand{\d}{\mathrm{d}}
\newcommand{\bj}{\bm{j}}
\newcommand{\bJ}{\bm{J}}
\newcommand{\bX}{\bm{X}}
\newcommand{\bx}{\bm{x}}
\newcommand{\cF}{\mathcal{F}}
\newcommand{\Ind}{\mathbbm{1}}
\newcommand{\ceil}[1]{\left\lceil #1 \right\rceil}
\newcommand{\abs}[1]{\left| #1 \right|}
\newcommand{\pnorm}[2]{\lVert#1\rVert_{#2}}
\begin{document} 
	
\title{Posterior Contraction and Testing for Multivariate Isotonic Regression
\thanks{
The research was supported in part by NSF Grant number DMS-1916419.
}}

\author{Kang Wang \thanks{Department of Statistics, North Carolina State Universtiy. Email: kwang22@ncsu.edu}
\and
Subhashis Ghosal\thanks{Department of Statistics, North Carolina State Universtiy. Email: sghosal@stat.ncsu.edu}}

\date{}
\maketitle
\begin{abstract}
\noindent
 We consider the nonparametric regression problem with multiple predictors and an additive  error, where the regression function is assumed to be coordinatewise nondecreasing. We propose a Bayesian approach to make an inference on the multivariate monotone regression function, obtain the posterior contraction rate, and construct a universally consistent Bayesian testing procedure for multivariate monotonicity. To facilitate posterior analysis, we set aside the shape restrictions temporarily, and endow a prior on blockwise constant regression functions  with heights independently normally distributed. The unknown variance of the error term is either estimated by the marginal maximum likelihood estimate, or is equipped with an inverse-gamma prior. Then the unrestricted block-heights are a posteriori also independently normally distributed given the error variance, by conjugacy. To comply with the shape restrictions, we project samples from the unrestricted posterior onto the class of multivariate monotone functions, inducing the ``projection-posterior distribution'', to be used for making an inference. Under an $\LL_1$-metric, we show that the projection-posterior based on $n$ independent samples contracts around the true monotone regression function at the optimal rate $n^{-1/(2+d)}$. Then we construct a Bayesian test for multivariate monotonicity based on the posterior probability of a shrinking neighborhood of the class of multivariate monotone functions. We show that the test is universally consistent, that is, the level of the Bayesian test goes to zero, and the power at any fixed alternative goes to one. Moreover, we show that for a  smooth alternative function, power goes to one as long as its distance to the class of multivariate monotone functions is at least of the order of the estimation error for a smooth function. To the best of our knowledge, there is no other test for multivariate monotonicity available in the Bayesian or the frequentist literature.
 \vskip 0.3cm

\noindent
{\bf Keywords:} Isotonic regression;
Multivariate isotonic regression;
Bayesian tests for multivariate monotonicity;
Contraction rate.
\end{abstract}

\section{Introduction}
\label{sec:introduction}
Shape restricted inference is an important nonparametric statistical technique with a long history. Functions with qualitative shape restrictions, like monotonic functional relationship between variables, are quite common in natural sciences, sociology, economics and many other areas. Shape restrictions on the function space can also serve as the relaxation to restricted parametric models, such as log-concave density estimation. The shape constraints themselves yield function estimators with good statistical properties without resorting to subjective selection of the smoothness level, such as in kernel or spline smoothing. Starting from early works on statistical inference under order restrictions, problems with monotonicity constraints on parameters of interest, regression functions and probability densities were extensively studied. For the univariate monotone function estimation problem, the least squares estimator for an isotonic regression function and the maximum likelihood estimator for a decreasing density function have interesting geometrical representations respectively as the slope of the  greatest convex minorant and the least concave majorant of a cumulative sum diagram. The limit distribution at an interior point on which the function has positive derivative is well known as the rescaled Chernoff's distribution; see Grenander \cite{Grenander1956}, Prakasa Rao \cite{PrakasaRao1969}, Brunk \cite{Brunk1970}, Groeneboom \cite{Groeneboom1985, groeneboom1989brownian}, Barlow et al. \cite{barlow1972statistical}, and Robertson et al. \cite{robertson1988order}. Asymptotic global behaviors of the least squares estimators under monotone constraints are well developed with respect to various metrics; see Groeneboom \cite{Groeneboom1985}, Kukilov and Lopuha\"a  \cite{Kulikov2005}, Durot \cite{durot2007}, and Durot et al. \cite{Durot2012}. Zhang \cite{zhang2002risk} and Bellec \cite{bellec2018sharp} studied the non-asymptotic risk bounds of the least squares estimators. Testing of the monotonicity was studied in the univariate case by Akakpo et al. \cite{akakpo2014testing}, Hall and Heckman \cite{hall2000testing} and Ghosal et al. \cite{ghosal2000testing}.
Applications of shape restricted inference in various areas, like causal inference, genetics, material science are still of growing interest; more details can be found in Westling et al. \cite{Westling2020}, Luss and Rosset \cite{luss2014generalized} and Vittorietti et al. \cite{Vittorietti2021}.

Compared to the well-studied case of univariate monotone shape restricted inference, convergence results for multivariate monotonicity were lacking until recent years. Among different possible multivariate monotonicity restrictions, the coordinatewise monotonicity is popularly considered. This naturally arises in some modeling contexts studied in Robertson et al. \cite{robertson1988order},  Saarlera and Arjas \cite{saarela2011method} and Fokianos et al.  \cite{Fokianos2020}. In the frequentist literature, the least squares estimator under the multivariate coordinatewise monotonicity  constraint has received the most attention. For both a fixed grid design or a random design, the minimax rate is given by $n^{-\min\{2/(d+2), 1/d\}}$ with respect to the squared empirical $\LL_2$-metric when the true regression function is coordinatewise nondecreasing and is of bounded variation (Chatterjee et al. \cite{chatterjee2018matrix}, Han  et al. \cite{han2019isotonic}). 
Han \cite{han2021set} showed that some special global empirical risk minimizers, such as the least squares estimator in multivariate isotonic regression, are rate optimal even when the entropy integral concerned therein diverges rapidly. 
For $d\ge 2$ and $p\geq 1$, the minimax risk under the general empirical $\LL_p$-loss $n^{-1} \sum_{i=1}^n \E |\hat f(x_i)-f(x_i)|^p$ of the estimator $\hat f$ of a function $f$ for deterministic predictors on a grid, and the integrated $\LL_p$-risk $\int \E |\hat f(x)-f(x)|^p dG(x)$ for a random predictor $X\sim G$, are bounded below by a multiple of $n^{-\min\{1/d, p/(d+2)\}}$ under some conditions on the signal-to-noise ratio and the error term; see Deng and Zhang \cite{deng2020isotonic}. Bagchi and Dhar \cite{bagchi2020study} derived the asymptotic distribution of the least square estimator of a multivariate monotone regression function. 
In addition to the least squares estimator or the empirical risk minimization estimators, other estimators, such as a block-estimator modifying the min-max formula for the isotonic least squares solution (Robertson et al.  \cite{robertson1988order}), have been proposed and studied; see Fokianos et al. \cite{Fokianos2020}, Deng and Zhang \cite{deng2020isotonic} and Han and Zhang \cite{han2020}.
The computation of isotonic regression with respect to a general partial ordering minimizing the  $\LL_q$-metric also attracted attention. One solution is to put this question under the framework of convex optimization with linear constraints; see, for example, Dykstra and Robertson \cite{Dykstra1982}, de Leeuw \cite{Leeuwisotone} and Meyer \cite{Meyer2013}. A sequential partitioning algorithm is designed for isotonic regression under the weighted $\LL_1$-metric that computes in $O(n\log n)$ time for the coordinatewise isotonic regression with $2$-dimensional grid designs and in $O(n^2\log n)$ time for the $d\ge 3$ case; see Stout \cite{Stout2013} for details. Stout \cite{Stout2015} gave another algorithm with better computation time under the $\LL_1$-metric for the unweighted data. In terms of the $\LL_2$-metric, which leads to the usual isotonic least squares estimator, the algorithm in Spouge et al. \cite{Spouge2003} can compute in $O(n^2)$ time for a two-dimensional grid data.

Bayesian approaches to isotonic regression are also available in the literature. Most of these approaches deal with a univariate isotonic regression function. Neelon and Dunson \cite{Neelon2004} modeled the regression function as a piecewise linear function and incorporated the monotonicity constraints in the priors of the sequential slopes. Shivley et al. \cite{shively2009bayesian} 
considered Bayesian regression splines under the monotonicity constraint, which is incorporated into the spline coefficients through a mixture of a constrained normal distribution and a probability distribution on the boundary of the constrained parameter set. Lin and Dunson \cite{lin2014bayesian} considered a Gaussian process prior, and projected posterior samples on the space of monotone functions to obtain an induced posterior distribution, which is subsequently used to make inference. Chakraborty and Ghosal  \cite{chakraborty2021convergence,chakraborty2020coverage,ChakrabortyDensity} used the same idea with a piecewise constant prior and obtained results on posterior contraction and frequentist coverage of Bayesian credible intervals. For the multivariate monotone regression, Saarela and Arjas \cite{saarela2011method} used marked point processes to construct piecewise constant sample paths for the function. They considered a homogeneous Poisson process prior on the random point positions and endowed the associated marks, the function value at the point, with the uniform prior supported on the allowed interval restricted by the shape constraints. Chipman et al. \cite{chipman2021mbart} applied a constrained sum-of-trees to model monotone regression functions. To obtain posterior samples, Markov chain Monte Carlo (MCMC) methods are used for each method mentioned above.
Bayesian testing procedure of the monotonicity in the univariate case has also been proposed by a few authors. Salomond \cite{salomond2014adaptive} and Chakraborty and Ghosal \cite{chakraborty2021convergence, ChakrabortyDensity} developed tests based on the posterior distribution of a discrepancy of the function from the unrestricted posterior with its monotone projection. Scott et al. \cite{scott2015} used smoothing splines and regression splines to model the regression function, and incorporated the monotonicity constraints into the prior for the coefficients. For testing the monotonicity, they considered the Bayes factor and converted the monotonicity hypothesis to a condition on the minimum of the derivative functions. To the best of our knowledge, no test for multivariate monotonicity, Bayesian or frequentist, is yet available in the literature. 

In this paper, we consider a Bayesian approach to multivariate monotone regression, using the projection technique. We show that the  resulting induced posterior supported on block-wise constant multivariate monotone function contracts at the optimal rate with respect to an $\LL_1$-metric. The basis of the result is a new $\LL_1$-approximation result for multivariate monotone functions by piecewise constant functions. We then construct a test for multivariate monotonicity based on the posterior probability of a slight enlargement of the set of multivariate monotone functions. We show that the resulting Bayesian test is universally consistent in that the size of the test goes to zero, and the power goes to one at any fixed alternative, as the sample size increases to infinity.  
We further show that, even for alternatives approaching the null region, the power can go to one, provided that the alternative maintains a distance at least a sufficiently large multiple of the posterior contraction rate determined by its smoothness. These results generalize the testing results of Chakraborty and Ghosal \cite{chakraborty2021convergence} to the multidimensional predictors. 

The rest of this paper is organized as follows. In Section \ref{sec:setup}, we describe the prior distribution and the projection-posterior approach. Posterior contraction rates and the properties of the Bayesian test for monotonicity are presented in Section \ref{sec:main}. Simulation studies to judge the qualities of the proposed estimation and testing procedure in finite sample sizes are conducted in Section \ref{sec:simulation}. Proofs are deferred to Section \ref{sec:proofs}. Certain auxiliary results and their proofs are presented in the appendix.

\section{Setup, prior and posterior}
\label{sec:setup}

We shall use the following notations and symbols throughout the paper. The notation $\RR$ stands for the real line, $\ZZ$ for the set of integers, $\ZZ_>$ for the set of positive integers. Vectors and matrices will be denoted by bold letters, and the default form of a vector is assumed to be in the column form. For $\bm{a}\in\RR^d$, let $a_k$ denote the $k$th coordinate, $k=1,\ldots, d$. The symbols $\bm{1}$ and $\bm{0}$ will respectively denote the $d$-dimensional all-one and all-zero vectors. For a real $x$, $\lfloor x \rfloor$ (respectively, $\lceil x\rceil$) will stand for the greatest integer less (respectively, smallest integer greater) than or equal to $x$. The indicator function of a set $A$ is denoted by $\Ind_A(\cdot)$. For $p>0$, let $\LL_p(\mu)$ denote the set of real-valued functions defined on $[0,1]^d$ with respect to a measure $\mu$ whose $p$th power is integrable. For $p\geq 1$ and $f\in \LL_p(\mu)$, the $\LL_p$-norm of $f$ is denoted by $\pnorm{f}{p,\mu}$. For a distance $\rho$ on functions, a function $f$ and a set of functions $\cF$, let $\rho(f,\cF)=\inf\{\rho(f,g): g\in \cF\}$. The symbol $\lesssim$ will stand for an inequality up to a constant multiple, and $\asymp$ will stand for equality in order. For two positive real sequences, $a_n$ and $b_n$, we also say $a_n \gg b_n$ if $b_n = o(a_n)$. 
Let $\N(\nu,\sigma^2)$ stand for the normal distribution with mean $\nu$ and variance $\sigma^2$.

Consider the natural partial ordering $\preceq$  on $\RR^d$ given by: $\bx_1\preceq \bx_2$ if $x_{1,k}\leq x_{2,k}$ for every $1\leq k \leq d$ and $(\bx_1,\bx_2)\in \RR^d\times \RR^d$ and we also use $\bx_2\succeq \bx_1$ if $\bx_1\preceq \bx_2$. 
For $(\bj_1,\bj_2)\in \ZZ^d\times\ZZ^d$ and $\bj_1\preceq\bj_2$, let  $[\bj_1:\bj_2] = \{\bj\in \ZZ^d: \bj_1\preceq \bj \preceq \bj_2\}$. 

\begin{definition}
	\label{def:mmonotone}
	A function $f: I\to \RR$, where $I\subset \RR^d$, is called multivariate monotone if $f(\bx_1)\le f(\bx_2)$ whenever $\bx_1\preceq \bx_2$. 
\end{definition}

The space of all multivariate monotone functions on $[0,1]^d$ will be denoted by $\mathcal{M}$.  

We consider the nonparametric multivariate regression model 
\begin{align}
	\label{formula:model}
	Y=f(\bX)  + \varepsilon,
\end{align}
where $\bX$ is the $d$-dimensional predictor and $\varepsilon$ is an error term with zero mean and finite variance, independent of $\bX$. We shall assume, essentially without loss of generality, that the domain of $\bX$ is $[0,1]^d$. Instead of a traditional smoothness assumption on the regression function $f$, we assume that $f$ is multivariate monotone. 

We observe the data $\mathbb{D}_n$ consisting of $n$ samples $(\bX_1,Y_1),\ldots,(\bX_n,Y_n)$ independently from the model. The predictor variable $\bX$ may be deterministic, or may be obtained independently from a fixed distribution $G$, independent of the random error variable $\varepsilon$. 
To make inference on $f$, we adopt a Bayesian approach by putting an appropriate prior distribution on $f$ and other parameters of the model. The main objective of this paper is to study the contraction rate of the posterior distribution, and construct a Bayesian test for multivariate monotonicity with some desirable large sample frequentist properties. To facilitate Bayesian inference, we construct a likelihood based on the working model assumption that $\varepsilon_i\stackrel{\mathrm{i.i.d.}}{\sim}\N (0,\sigma^2)$, although the actual distribution may be non-normal. For a given $f$, let $p_{f,\sigma}(y|x)= (\sqrt{2\pi} \sigma)^{-1} \exp[-(y-f(x))^2/(2\sigma^2)]$ stand for the conditional density of $Y$ given $X=x$.  

Let $G_n=n^{-1}\sum_{i=1}^n \delta_{\bX_i}$ denote the empirical distribution of $\bX$. For a deterministic predictor variable $\bX$, this is a sequence of deterministic distributions, while for a random $X$, this sequence is random. Let $f_0$ stand for the true value of the regression function $f$, $\sigma_0$ stand for the true value of $\sigma$, and let $\P_0$ denote the true distribution of $(\bX,Y)$. The expectation with respect to  $\P_0$ will be denoted by $\E_0$.

The usual approach to Bayesian inference for model \eqref{formula:model} with $f\in \mathcal{M}$ would be to put a prior on $f$ supported within $\mathcal{M}$, and obtain the posterior distribution to make an inference. However, the shape restriction in $\mathcal{M}$ forbids certain natural priors, such as the one on step functions with the step-heights independently normally distributed, which allows fast calculations through conjugacy. A compliant prior will have to maintain the order restriction on the step-heights, which makes the posterior computation more challenging. More importantly, frequentist analyses such as posterior contraction rates and limiting coverage of credible regions are extremely hard. The projection-posterior approach provides a simple tool to ``correct'' a non-compliant posterior distribution by projecting posterior samples on the relevant parameter space and uses the resulting induced distribution to make inference, as in Lin and Dunson \cite{lin2014bayesian} and Chakraborty and Ghosal \cite{chakraborty2021convergence, ChakrabortyDensity}. A generalization of this approach using a broader ``immersion map'' was used by Wang and Ghosal \cite{Kang} to study the coverage of a Bayesian credible interval of a multivariate monotone regression function at a given point.  

To obtain posterior contraction rate in terms of a global metric like an  $\LL_1$-distance, we follow the projection-posterior approach, as in the univariate case of Chakraborty and Ghosal \cite{chakraborty2021convergence}. Given $J\in \ZZ_{>0}$, let 
$I_{\bm{1}} =  [0,J^{-1}]^d$ and $I_{\bj} = \prod_{k=1}^d ((j_{k}-1)/J, j_k/J]$ for $ \bj \in [\bm{1}:\bJ]\backslash \{\bm{1}\}$. 
Let $\cF_J=\{f:f = \sum_{\bj\in[\bm{1}:\bJ]} \theta_{\bj}\Ind_{I_{\bj}}, \theta_{\bj}\in\RR\}$, the set of piecewise constant functions. If $f$ were unrestricted, a conjugate prior for the model \eqref{formula:model} is given by letting 
\begin{align} 
	\label{prior theta}
	\theta_{\bj}\stackrel{\mathrm{ind}}{\sim} \N(\zeta_{\bj}, \sigma^2\lambda_{\bj}^2), \qquad \bj\in[\bm{1}:\bJ],
\end{align}
where $\zeta_{\bj}, \lambda_{\bj}$ are hyperparameters, and then either by choosing $J$ deterministically (increasing with $n$) or by putting a prior on $J$. The prior and the resulting posterior are both supported within $\cF_J$, and the posterior is given by 
\begin{align}
	\label{eq:posterior}
	\theta_{\bj}|(\mathbb{D}_n, \sigma^2, J) \stackrel{\mathrm{ind}}{\sim}  \N((N_{\bj} \bar{Y}|_{I_{\bj}} + \zeta_{\bj}\lambda_{\bj}^{-2})/(N_{\bj} + \lambda_{\bj}^{-2}), \sigma^2/(N_{\bj} + \lambda_{\bj}^{-2})),
\end{align}
where $N_{\bj} =\sum_{i=1}^n \mathbbm{1} \{ \bX_i \in I_{\bj}\}$, the number of observed points falling in the $\bj$th block,  and $\bar{Y}|_{I_{\bj}} = \sum_{i=1}^n Y_{i}\Ind\{\bX_{i}\in I_{\bj}\}/N_{\bj}$, $\bj\in[\bm{1}:\bJ]$. 
The resulting posterior for $f$ will be referred to as the ``unrestricted posterior'', which is not supported within $\mathcal{M}$. The projection map then produces an induced distribution suitable for an inference, to be referred to as the ``projection-posterior'' distribution.  

To study the asymptotic properties of the posterior distribution of $f$ in the setting of a deterministic predictor, we consider the $\LL_1(G_n)$-distance, while for a random predictor arising from a distribution $G$, we also use the $\LL_1(G)$-distance. It will be seen that the projection posterior inherits the convergence properties of the original posterior if the same metric is used to obtain the projection, and hence it will be sufficient to study the unrestricted posterior, which can be done using traditional tools like moment bounding or by applying the general theory of posterior contraction (cf., Ghosal and van der Vaart \cite{ghosal2017}). For random predictors, another alternative is to use the Lebesgue $\LL_1$-distance. If $G$ admits a density bounded above and below, then  the $\LL_1(G)$-distance and the Lebesgue $\LL_1$-distance are equivalent, and hence they lead to the same rate. It is also sensible to consider $\LL_p$-distances for $p$ different from $1$, but the weaker $\LL_p$-approximation property (see Lemma~\ref{approximation}) will lead to a suboptimal contraction rate $n^{-1/(pd+2)}$ for $1< p\le 2$. For the univariate case $d=1$, Chakraborty and Ghosal \cite{chakraborty2021convergence} improved the rate to the optimal rate $n^{-1/3}$ up to a logarithmic factor by using variable knots and by putting a prior on the knots, but the corresponding improved approximation result does not seem to be obtainable in the multivariate case. 

We make the following assumption throughout. 

\begin{assumption}[Design]
	\label{assumption:predictor}
	 The predictor variables  $\bX_1,\ldots,\bX_n$ are deterministic and that $\max \{ N_{\bj}: \bj\in [\bm{1}:\bJ]\}\lesssim n/J^d$, or are sampled i.i.d. from a distribution $G$ with bounded density $g$. 
\end{assumption}

\begin{assumption}[Data]
	\label{assumption:data}
	The true regression function $f_0\in \mathcal{M}$ and the true distribution of the regression error  $\varepsilon$ has mean zero and true variance $\sigma_0^2$.
\end{assumption}

\begin{assumption}[Prior]
	\label{assum:prior}
	The parameters  $\zeta_{\bj}$ and $\lambda_{\bj}$ in the prior on the coefficients $\theta_{\bj}$ satisfy $\max_{\bj}|\zeta_{\bj}|<\infty$ and $ 0<\min_{\bj}\lambda^2_{\bj}\leq \max_{\bj}\lambda_{\bj}^2 <\infty$.

	If the number $J$ of steps in each direction is not chosen deterministically, then it is given a prior supported on $\ZZ_{>0}$ satisfying the tail condition 
	\begin{align}
		\label{prior:J}
		\exp\{-b_2 J^d \log J\} \leq \pi(J) \leq \exp\{-b_1 J^d \log J\},
	\end{align}
	where $b_1$ and $b_2$ are positive hyperparameters. 
\end{assumption}

To deal with the parameter $\sigma^2$, we can plug in the marginal maximum likelihood estimator (MLE) of $\sigma^2$. Under the Gaussian working model, the marginal MLE is given by 
\begin{align}\label{sigmasq.mmle}
	\hat{\sigma}_n^2=
	\frac{1}{n}\left[\sum_{i=1}^n \big(Y_i - \sum_{\bj:\bX_i\in I_{\bj}}\zeta_{\bj}\big)^2 - \sum_{\bj\in[\bm{1}:\bJ]}\frac{N_{\bj}^2 (\bar{Y}|_{I_{\bj}} - \zeta_{\bj})^2}{N_{\bj}+\lambda_{\bj}^{-2}} \right].
\end{align}
An alternative is to adopt a fully Bayesian approach, endowing $\sigma^2$ with an Inverse-Gamma prior IG$(\beta_1, \beta_2)$, for some $\beta_1>0,\beta_2>0$. By conjugacy, the marginal posterior distribution is
\begin{align}\label{sigmasq.mpostd}
	\sigma^2|\mathbb{D}_n \sim \text{IG}(\beta_1 + n/2, \beta_2 + n\hat{\sigma}^2_{n}/2).
\end{align}

Let $\mathcal{M}_J= \cF_J\cap \mathcal{M}$.
To comply with the shape constraints, we project the posterior of $f$ onto the monotone function space $\mathcal{M}_J$ through the map 
\begin{align}
	f\mapsto f^*\in\argmin \{ \rho(f,h): h\in \mathcal{M}_J\}, 
\end{align}
provided the minimizer exists, where $\rho$ is the metric of interest. For $f= \sum_{\bj\in[\bm{1}:\bJ]} \theta_{\bj}\Ind_{I_{\bj}}\in \cF_J$, the condition of monotonicity is equivalent to that the array of the coefficients lies in the convex cone 
\begin{align}\label{convexcone}
	\mathcal{C} = \{\bm{\theta}=(\theta_{\bj}: \bj\in[\bm{1}:\bJ]): \theta_{\bj_1}\leq \theta_{\bj_2},\text{ if } \bj_1\preceq\bj_2\}. 
\end{align}
In this paper, $\rho $ will be taken as the $\LL_p(G^*)$-distance for a distribution $G^*$ on $[0,1]^d$, possibly depending on $n$ (such as $G_n$), and some $p\ge 1$, usually $1$. By minimizing the $\LL_p(G^*)$-distance over $\mathcal{M}_J$, we will get the projection posterior samples, and the corresponding induced distribution as the projection-posterior distribution to make an inference.
Let the Lebesgue measure on $[0, 1]^d$ be denoted by $\lambda$.
The following result shows that the
projection posterior given by the
$\LL_p(\lambda)$-projection onto $\mathcal{M}$ charges only $\mathcal{M}_J$.
\begin{proposition}
	\label{prop:piece}
	For any $f$ in $\cF_J$ and $p\ge 1$, 
	its $\LL_p(\lambda)$-projection onto $\mathcal{M}$, $f^*$, exists, 
	and $f^*$ is also the solution of the minimization problem $\min\{\|f-h\|_{p, \lambda}: h\in \mathcal{M}_J\}$.
\end{proposition}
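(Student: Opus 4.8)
The plan is to prove the equivalent statement $\inf_{h\in\mathcal{M}}\pnorm{f-h}{p,\lambda}=\inf_{h\in\mathcal{M}_J}\pnorm{f-h}{p,\lambda}$ together with the fact that the right-hand infimum is attained; the attaining minimizer then simultaneously realizes a projection onto the larger set $\mathcal{M}$, which is exactly the two assertions. Since $\mathcal{M}_J\subset\mathcal{M}$, the inequality $\inf_{\mathcal{M}}\le\inf_{\mathcal{M}_J}$ is automatic, so the entire content lies in the reverse inequality and in attainment.

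I would dispose of attainment over $\mathcal{M}_J$ first, as it is routine. Writing $h=\sum_{\bj}\theta'_{\bj}\Ind_{I_{\bj}}$, the objective becomes $\pnorm{f-h}{p,\lambda}=(J^{-d}\sum_{\bj}|\theta_{\bj}-\theta'_{\bj}|^p)^{1/p}$, a continuous and coercive function of the coefficient array $\bm{\theta}'=(\theta'_{\bj})$. Because $\mathcal{M}_J$ corresponds, via Definition~\ref{def:mmonotone}, to the closed convex cone $\mathcal{C}$ of \eqref{convexcone}, and a coercive continuous function has compact sublevel sets, its minimum over the closed set $\mathcal{C}$ is attained; hence $f^*$ exists in $\mathcal{M}_J$.

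The crux is the reverse inequality, for which I would use an offset-averaging argument. Fix any $h\in\mathcal{M}$ with $\pnorm{f-h}{p,\lambda}<\infty$, let $\bm{a}_{\bj}$ denote the lower corner of the block $I_{\bj}$, and for an offset $\bu$ in the reference cell $(0,J^{-1}]^d$ define the step function $h^{(\bu)}=\sum_{\bj}h(\bm{a}_{\bj}+\bu)\Ind_{I_{\bj}}$. The key geometric observation is that $h^{(\bu)}\in\mathcal{M}_J$: if $\bj_1\preceq\bj_2$ then $\bm{a}_{\bj_1}\preceq\bm{a}_{\bj_2}$, so $\bm{a}_{\bj_1}+\bu\preceq\bm{a}_{\bj_2}+\bu$, and monotonicity of $h$ gives $h(\bm{a}_{\bj_1}+\bu)\le h(\bm{a}_{\bj_2}+\bu)$, placing the coefficients of $h^{(\bu)}$ in $\mathcal{C}$. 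It is essential here that the blocks are congruent axis-aligned translates on a regular grid, so that a single common offset preserves the coordinatewise order across all blocks; this is the one place where the grid structure is genuinely used, and it is the heart of the argument.

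Finally I would integrate over the offset. Since $\bu\mapsto\bm{a}_{\bj}+\bu$ parametrizes $I_{\bj}$ as $\bu$ ranges over the reference cell, Tonelli's theorem (applicable because monotone functions are Borel, so the integrand is measurable and nonnegative) gives
\[
\pnorm{f-h}{p,\lambda}^p=\sum_{\bj}\int_{I_{\bj}}|\theta_{\bj}-h(\bx)|^p\,\d\bx=\int_{(0,J^{-1}]^d}\Big(\sum_{\bj}|\theta_{\bj}-h(\bm{a}_{\bj}+\bu)|^p\Big)\d\bu=J^d\int_{(0,J^{-1}]^d}\pnorm{f-h^{(\bu)}}{p,\lambda}^p\,\d\bu,
\]
so $\pnorm{f-h}{p,\lambda}^p$ is precisely the average of $\pnorm{f-h^{(\bu)}}{p,\lambda}^p$ over the reference cell. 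Hence some offset $\bu^*$ satisfies $\pnorm{f-h^{(\bu^*)}}{p,\lambda}\le\pnorm{f-h}{p,\lambda}$ with $h^{(\bu^*)}\in\mathcal{M}_J$, which yields $\inf_{\mathcal{M}_J}\le\pnorm{f-h}{p,\lambda}$; taking the infimum over $h\in\mathcal{M}$ gives the reverse inequality. Combining with attainment, the minimizer $f^*\in\mathcal{M}_J$ achieves $\pnorm{f-f^*}{p,\lambda}=\inf_{h\in\mathcal{M}}\pnorm{f-h}{p,\lambda}$, so $f^*$ is also an $\LL_p(\lambda)$-projection of $f$ onto $\mathcal{M}$, completing both claims.
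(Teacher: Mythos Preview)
Your proof is correct and takes a genuinely different route from the paper's. The paper replaces an arbitrary $h\in\mathcal{M}$ by its block-average $\bar h=\sum_{\bj}\big(\lambda(I_{\bj})^{-1}\int_{I_{\bj}}h\,\d\lambda\big)\Ind_{I_{\bj}}$, observes that $\bar h\in\mathcal{M}_J$ (block averages of a monotone function on a regular grid are monotone in the index), and then applies Jensen's inequality on each block to the constant $f|_{I_{\bj}}$ to get $\int_{I_{\bj}}|\bar h-f|^p\,\d\lambda\le\int_{I_{\bj}}|h-f|^p\,\d\lambda$, yielding $\|f-\bar h\|_{p,\lambda}\le\|f-h\|_{p,\lambda}$ directly. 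You instead sample $h$ at a translated grid and use the identity $\|f-h\|_{p,\lambda}^p=J^d\int_{(0,J^{-1}]^d}\|f-h^{(\bu)}\|_{p,\lambda}^p\,\d\bu$ together with the pigeonhole/averaging principle. Both arguments hinge on the same structural fact---that the regular grid makes the passage from $h$ to a blockwise-constant surrogate order-preserving---but exploit it differently: the paper's version is a one-line contraction via Jensen, while yours is more elementary (only Tonelli and an average bound) and makes the role of grid translation-invariance fully explicit. A small cosmetic point: since every $h\in\mathcal{M}$ on $[0,1]^d$ is bounded between $h(\bm{0})$ and $h(\bm{1})$, the caveat $\|f-h\|_{p,\lambda}<\infty$ is automatically satisfied.
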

However, for a general distribution $G^{\ast}$, the $\LL_p(G^{\ast})$-projection of $f\in \cF_J$ onto $\mathcal{M}_J$ is not necessarily the $\LL_p(G^{\ast})$-projection onto $\mathcal{M}$. That means, given $f\in \cF_J$, the minimizing problem $\min\{\|f - h\|_{p, G^{\ast}}: h\in \mathcal{M}\}$ can possess no solution in $\cF_J$, as the minimizing problem also depends on the weighting distribution $G^{\ast}$. This is different from the univariate case, where the same minimizing problem always has solutions in $\mathcal{M}_J$. 

We focus on the $\LL_p(G^*)$-projection onto $\mathcal{M}_J$. For $f\in \cF_J$, the minimizing problem then becomes,
\begin{align}\label{ming}
    \min_{\bm{\theta}^{\ast}\in \mathcal{C}} \sum_{\bj\in [\bm{1}:\bJ]}|\theta_{\bj} - \theta^{\ast}_{\bj}|^p G^{\ast}(I_{\bj}).
\end{align}
The solution of isotonic optimization problem in \eqref{ming} is available in some R packages like `isotone', see de Leeuw \cite{Leeuwisotone}. It is a convex optimization problem with a set of linear constraints in \eqref{convexcone}, so a general convex optimization algorithm, such as an active-set method or an interior-point method, can be applied. However, algorithms specially designed for isotonic regression may obtain the solution faster. By the algorithms given in Stout \cite{Stout2013}, problem \eqref{ming}
can be solved  in $O(J^d\log J)$ steps when $d=2$, and in $O(J^{2d}\log J)$ steps when $d\ge 3$. It is clear that the solution is unique if $p>1$ and $G^*(I_{\bj})>0$ for all $\bj$, by the strict convexity of the $\LL_p(G^*)$-norm. For the $\LL_1(G^*)$-norm, the solution may not be unique, but any solution may be chosen to define the projection-posterior. The convergence properties are not affected by the choice. 
For the choice $G^*=G_n$ primarily used in this paper, the minimization in \eqref{ming} reduces to
\begin{align}
	\label{mingn}
	\min_{\theta^{\ast}\in\mathcal{C}} \sum_{\bj\in[\bm{1}:\bJ]} N_{\bj}|\theta_{\bj}^{\ast} - \theta_{\bj}|^p,
\end{align}
while the use of the Lebesgue measure leads to the unweighted isotonization problem of the minimization of $ \sum_{\bj\in[\bm{1}:\bJ]} |\theta_{\bj}^{\ast} - \theta_{\bj}|^p$ subject to the restriction that $\bm{\theta_{\bj}}^{\ast}\in\mathcal{C}$. 

\section{Main results}\label{sec:main}

Let a sample from the projection-posterior defined by the minimization of an  $\LL_1$-distance, be denoted by $f^{\ast} = \sum_{\bj\in[\bm{1}:\bJ]} \theta_{\bj}^{\ast} \Ind_{\bj}$. The first part of the following theorem under abstract conditions gives the projection-posterior contraction rates with respect to a variety of $\LL_1$-metrics. In the second part of the theorem, the conclusion is specialized to the empirical $\LL_1$-metric or the $\LL_1$-metric with respect to the distribution of the predictor under easily verifiable conditions. 

\begin{theorem}\label{thm:l1concentration}
Let $J$ be deterministic, Assumptions {\rm  \ref{assumption:data}--\ref{assum:prior}} hold and 
    let $G^{\ast}$ be a distribution on $[0,1]^d$ possibly depending on $n$ and $\bX_1,\ldots,\bX_n$ satisfying the conditions that 
    \begin{align}
    \label{abstract G*}
\E \big[  \max_{\bj\in [\bm{1}:\bm{J}]}  G^\ast (I_{\bj}) \big] \lesssim J^{-d},\qquad    \E \big [ \sum_{\bj\in [\bm{1}:\bm{J}]} G^*(I_{\bj}) (N_{\bj}+1)^{-1} \big]\lesssim J^d/n.
    \end{align}
    Let $f^{\ast}$ be the $\LL_1(G^{\ast})$-projection of $f$ sampled from the unrestricted posterior on $\mathcal{F}_J$. Assume further that either $\sigma$ is known, or a consistent estimator is plugged-in, or its posterior distribution is consistent. Then for $\epsilon_n=\max\{\sqrt{J^d/n},J^{-1}\}$, we have that 
    \begin{align}
    \E_0 \Pi(\|f^{\ast}-f_0\|_{1,G^{\ast}} > M_n \epsilon_n | \mathbb{D}_n ) \to 0 
    \mbox{ for any } M_n\to \infty.
    \label{rate assertion}
    \end{align}
    The optimal $\LL_1(G^*)$-rate $n^{-1/(2+d)}$ is obtained above by choosing $J\asymp n^{1/(2d+1)}$.
    
    Further, let  Assumption {\rm  \ref{assumption:predictor}} hold, and if the predictor is random, assume $J^d (\log n)/n\to 0$. 
    Then the assertion \eqref{rate assertion}  holds for $G^*$ the empirical distribution $G_n$ for both deterministic and random predictor, and also for $G^*=G$ if the predictor is random with distribution $G$. 
\end{theorem}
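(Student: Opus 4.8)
The plan is to deduce the contraction of the projection-posterior from that of the unrestricted posterior, exploiting that the $\LL_1(G^\ast)$-projection onto $\mathcal{M}_J$ is a contraction toward any competitor in $\mathcal{M}_J$. Let $f=\sum_{\bj}\theta_{\bj}\Ind_{I_{\bj}}$ be drawn from the unrestricted posterior and let $f^\ast$ be its $\LL_1(G^\ast)$-projection onto $\mathcal{M}_J$, the minimizer in \eqref{ming}. Choose the block-average approximant $\bar f_0=\sum_{\bj}\bar\theta_{0,\bj}\Ind_{I_{\bj}}$ with $\bar\theta_{0,\bj}=J^d\int_{I_{\bj}}f_0\,\d\lambda$; since $f_0$ is coordinatewise nondecreasing, $\bar f_0\in\mathcal{M}_J$. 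Because $f^\ast$ minimizes $\|f-h\|_{1,G^\ast}$ over $h\in\mathcal{M}_J$ and $\bar f_0\in\mathcal{M}_J$, two applications of the triangle inequality give
\begin{align}
\label{plan:decomp}
\|f^\ast-f_0\|_{1,G^\ast}\le \|\bar f_0-f_0\|_{1,G^\ast}+2\|f-f_0\|_{1,G^\ast}.
\end{align}
By Markov's inequality it then suffices to show $\E_0\E\big[\|f^\ast-f_0\|_{1,G^\ast}\mid\mathbb{D}_n\big]\lesssim\epsilon_n$, as this yields $\E_0\Pi(\|f^\ast-f_0\|_{1,G^\ast}>M_n\epsilon_n\mid\mathbb{D}_n)\lesssim M_n^{-1}\to0$. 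Thus everything reduces to bounding the posterior mean of each term on the right of \eqref{plan:decomp}.

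The first term is, given $\bX_1,\dots,\bX_n$, a deterministic approximation error. On each block $|\bar\theta_{0,\bj}-f_0(\bx)|$ is at most the oscillation $\omega_{\bj}$ of $f_0$ over $I_{\bj}$, so $\|\bar f_0-f_0\|_{1,G^\ast}\le (\max_{\bj}G^\ast(I_{\bj}))\sum_{\bj}\omega_{\bj}$. Monotonicity of $f_0$ makes the oscillations telescope along each coordinate direction, giving $\sum_{\bj}\omega_{\bj}\lesssim J^{d-1}$ (this is the content of Lemma~\ref{approximation}); with the first hypothesis in \eqref{abstract G*} this yields $\E_0\|\bar f_0-f_0\|_{1,G^\ast}\lesssim J^{-d}\cdot J^{d-1}=J^{-1}\le\epsilon_n$. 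For the second term I insert the empirical block mean $\tilde\theta_{0,\bj}=N_{\bj}^{-1}\sum_{\bX_i\in I_{\bj}}f_0(\bX_i)$ (with any bounded convention when $N_{\bj}=0$), so that $\|f-f_0\|_{1,G^\ast}\le\|f-\tilde f_0\|_{1,G^\ast}+\|\tilde f_0-f_0\|_{1,G^\ast}$. The approximation piece $\|\tilde f_0-f_0\|_{1,G^\ast}$ is bounded exactly as above by $J^{-1}$. For the stochastic piece, normality of the posterior \eqref{eq:posterior} gives $\E[|\theta_{\bj}-\tilde\theta_{0,\bj}|\mid\mathbb{D}_n]\le|\hat\theta_{\bj}-\tilde\theta_{0,\bj}|+\sigma(N_{\bj}+\lambda_{\bj}^{-2})^{-1/2}$, where $\hat\theta_{\bj}$ is the posterior mean; taking $\E_0$ and using $\bar Y|_{I_{\bj}}-\tilde\theta_{0,\bj}=N_{\bj}^{-1}\sum_{\bX_i\in I_{\bj}}\varepsilon_i$ together with the bounded shrinkage toward $\zeta_{\bj}$ shows each summand is $\lesssim(N_{\bj}+1)^{-1/2}$. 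Hence $\E_0\E[\|f-\tilde f_0\|_{1,G^\ast}\mid\mathbb{D}_n]\lesssim\E_0\sum_{\bj}G^\ast(I_{\bj})(N_{\bj}+1)^{-1/2}$, and a pathwise Cauchy--Schwarz step over $\bj$ (using $\sum_{\bj}G^\ast(I_{\bj})=1$) followed by Jensen's inequality bounds this by $(\E_0\sum_{\bj}G^\ast(I_{\bj})(N_{\bj}+1)^{-1})^{1/2}\lesssim\sqrt{J^d/n}\le\epsilon_n$, by the second hypothesis in \eqref{abstract G*}. The unknown-$\sigma$ cases are absorbed here: on the event $\{|\hat\sigma_n-\sigma_0|<\delta\}$ (respectively under the consistent $\sigma^2$-posterior) the factor $\sigma$ is $O(1)$, while the complementary event has $\P_0$-probability $\to0$ and there $\Pi(\cdot)\le1$. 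This proves the abstract assertion \eqref{rate assertion}, and balancing $\sqrt{J^d/n}$ against $J^{-1}$ (the two match when $J^{d+2}\asymp n$) yields the stated optimal rate $n^{-1/(2+d)}$.

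For the second part I verify \eqref{abstract G*} in each case. When $G^\ast=G_n$ the second condition is automatic and dimension-free, since $\sum_{\bj}G_n(I_{\bj})(N_{\bj}+1)^{-1}=n^{-1}\sum_{\bj}N_{\bj}(N_{\bj}+1)^{-1}\le J^d/n$ pathwise. For a deterministic design the first condition is exactly the design hypothesis $\max_{\bj}N_{\bj}\lesssim n/J^d$, whence $\max_{\bj}G_n(I_{\bj})\lesssim J^{-d}$. For a random design with $G_n$, the first condition needs $\E_0\max_{\bj}N_{\bj}\lesssim n/J^d$; since $N_{\bj}\sim\mathrm{Bin}(n,G(I_{\bj}))$ with $G(I_{\bj})\lesssim J^{-d}$ by boundedness of the density, a Bernstein bound and a union bound over the $J^d$ blocks control the maximum at order $n/J^d$ provided $n/J^d\gg\log J$, which is guaranteed by the standing assumption $J^d(\log n)/n\to0$; the rare deviation event is absorbed by the crude bound $N_{\bj}\le n$. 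For $G^\ast=G$ and a random design, $\max_{\bj}G(I_{\bj})\lesssim J^{-d}$ again by boundedness of the density, while the second condition uses the binomial identity $\E_0(N_{\bj}+1)^{-1}\le\{(n+1)G(I_{\bj})\}^{-1}$, so that $\sum_{\bj}G(I_{\bj})\E_0(N_{\bj}+1)^{-1}\le J^d/(n+1)\lesssim J^d/n$.

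I expect two genuinely delicate points. First, the telescoping bound $\sum_{\bj}\omega_{\bj}\lesssim J^{d-1}$, i.e.\ the sharp $\LL_1$-approximation of a $d$-variate monotone function by a block-constant one at rate $J^{-1}$ rather than the weaker $J^{-1/p}$ one gets for heavier $\LL_p$ norms --- this is exactly what forces the choice $p=1$ and is supplied by Lemma~\ref{approximation}. Second, the expected-maximum control $\E_0\max_{\bj}N_{\bj}\lesssim n/J^d$ under a random design, where the interplay between the number of blocks $J^d$ and the per-block count must be calibrated through the growth restriction $J^d(\log n)/n\to0$; this is the only place where that restriction, and the concentration of the binomial counts, are essential. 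The remaining ingredients --- the projection contraction \eqref{plan:decomp}, the normal first-moment bound, and the Cauchy--Schwarz reduction to \eqref{abstract G*} --- are routine once these two are in hand.
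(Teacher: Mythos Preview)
Your proof is correct and follows the same architecture as the paper's: reduce the projection-posterior to the unrestricted posterior via the contraction inequality \eqref{plan:decomp}, invoke Lemma~\ref{approximation} for the $J^{-1}$ approximation term, and control the stochastic piece through the posterior moments together with the hypothesis \eqref{abstract G*}. The one technical difference is that you bound the posterior \emph{first} moment of $|\theta_{\bj}-\tilde\theta_{0,\bj}|$ and then apply a pathwise Cauchy--Schwarz over blocks to reach $\sum_{\bj}G^\ast(I_{\bj})(N_{\bj}+1)^{-1}$, whereas the paper first applies Cauchy--Schwarz to pass from $\LL_1(G^\ast)$ to $\LL_2(G^\ast)$ and then bounds the posterior \emph{second} moment via $\mathrm{Var}(\theta_{\bj}\mid\mathbb{D}_n)+(\E(\theta_{\bj}\mid\mathbb{D}_n)-\theta_{0,\bj})^2$; this yields an $M_n^{-2}$ decay for them versus your $M_n^{-1}$, but either suffices for \eqref{rate assertion}. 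Your verification of \eqref{abstract G*} for $G^\ast\in\{G_n,G\}$ matches the paper's, including the binomial identity \eqref{reciprocal binomial} and the Bernstein/union-bound argument for $\E_0\max_{\bj}N_{\bj}$ under the random design.
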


The optimal rate above reduces to the $\LL_1$-optimal rate $n^{-1/3}$ in the univariate case obtained by Chakraborty and Ghosal \cite{chakraborty2021convergence}. 
We may also like to study the posterior contraction rate with respect to the $\LL_p$-metric. However, for $p>1$, the $\LL_p$-approximation rate by the step function $f_J$ is weaker, only $J^{-1/p}$, at monotone functions with jumps; see Remark~\ref{rem:Lp}. Hence the $\LL_p$-contraction rate of the corresponding procedure will be suboptimal.

The distribution of a random predictor $\bX$ is often unknown, but we can compute the $\LL_1(G_n)$-projection. The following corollary asserts that for random predictors with density bounded and bounded away from $0$, the 
$\LL_1(G_n)$-projection posterior achieves
the same  posterior contraction rate with respect to the $\LL_1(\lambda)$-metric (and hence also under the $\LL_1(G)$-metric, which is equivalent under the assumed condition).

\begin{corollary}
\label{corollary l1}
Let $\bX_1,\ldots,\bX_n$ be i.i.d. with distribution $G$ admitting a density function $g$ bounded and bounded away from $0$. Let $J$ be deterministic, $J\to \infty$ and $J^{d} (\log n)/n\to 0 $.
	Then under Assumptions {\rm  \ref{assumption:data}} and {\rm \ref{assum:prior}}, for $\epsilon_n = \max\{\sqrt{J^d/n}, J^{-1}\}$ and any $M_n\to \infty$, 
		$\E_0 \Pi(\|f^{\ast}-f_0\|_{1,\lambda} > M_n \epsilon_n | \mathbb{D}_n ) \to 0$  where $f^{\ast}$ is the $\LL_1(G_n)$-projection of $f$ sampled from the unrestricted posterior. 
\end{corollary}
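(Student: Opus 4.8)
The plan is to transfer the contraction rate from the empirical metric $\pnorm{\cdot}{1,G_n}$, which is already controlled by Theorem~\ref{thm:l1concentration} with $G^\ast=G_n$, to the Lebesgue metric $\pnorm{\cdot}{1,\lambda}$. The two metrics cannot be compared directly on $f^\ast-f_0$ because $f_0$ need not be piecewise constant, so I would insert a piecewise-constant approximant $f_{0,J}\in\cF_J$ of $f_0$ (the one furnished by Lemma~\ref{approximation}) as a bridge. Since $f^\ast-f_{0,J}$ is piecewise constant on the common grid, its $\LL_1(\lambda)$- and $\LL_1(G_n)$-norms differ only through the block weights $\lambda(I_{\bj})=J^{-d}$ versus $G_n(I_{\bj})=N_{\bj}/n$, and the whole argument reduces to comparing these weights plus discarding the (posterior-free) approximation error.

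First I would establish this weight comparison on a high-probability design event. Define $A_n=\{N_{\bj}\ge\tfrac12 nG(I_{\bj})\text{ for all }\bj\in[\bm{1}:\bJ]\}$. Because $g$ is bounded away from $0$, $G(I_{\bj})\ge c_g J^{-d}$ for every block, so on $A_n$ one has $G_n(I_{\bj})\ge\tfrac{c_g}{2}\lambda(I_{\bj})$ and hence $\pnorm{h}{1,\lambda}\le\tfrac{2}{c_g}\pnorm{h}{1,G_n}$ for every $h\in\cF_J$. To bound $\P_0(A_n^c)$ I would apply a multiplicative Chernoff inequality to each $N_{\bj}\sim\mathrm{Bin}(n,G(I_{\bj}))$, yielding $\P_0(N_{\bj}<\tfrac12 nG(I_{\bj}))\le\exp(-c_g n/(8J^d))$, and then union-bound over the $J^d$ blocks. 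The bound tends to $0$ precisely because $J^d(\log n)/n\to0$ forces $n/J^d\gg\log n\ge\log(J^d)$; this is the step where both the lower density bound and the growth restriction on $J$ are essential, and I expect it to be the main obstacle.

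Next I would dispose of the two approximation terms, neither of which involves the posterior. By Lemma~\ref{approximation}, $\pnorm{f_0-f_{0,J}}{1,\lambda}\lesssim J^{-1}\le\epsilon_n$ deterministically, while boundedness of $g$ from above gives $\E_0\pnorm{f_0-f_{0,J}}{1,G_n}=\pnorm{f_0-f_{0,J}}{1,G}\le\pnorm{g}{\infty}\pnorm{f_0-f_{0,J}}{1,\lambda}\lesssim J^{-1}$. Hence, by Markov's inequality, the event $B_n=\{\pnorm{f_0-f_{0,J}}{1,G_n}\le M_n\epsilon_n\}$ satisfies $\P_0(B_n^c)\lesssim J^{-1}/(M_n\epsilon_n)\lesssim 1/M_n\to0$.

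Finally I would combine everything through the triangle inequality: on $A_n\cap B_n$,
\begin{align*}
\pnorm{f^\ast-f_0}{1,\lambda}
&\le \pnorm{f^\ast-f_{0,J}}{1,\lambda}+\pnorm{f_{0,J}-f_0}{1,\lambda}\\
&\le \tfrac{2}{c_g}\big(\pnorm{f^\ast-f_0}{1,G_n}+\pnorm{f_0-f_{0,J}}{1,G_n}\big)+\pnorm{f_{0,J}-f_0}{1,\lambda},
\end{align*}
so that, for a suitable constant $L$, the event $\{\pnorm{f^\ast-f_0}{1,\lambda}>LM_n\epsilon_n\}$ is contained in $\{\pnorm{f^\ast-f_0}{1,G_n}>M_n\epsilon_n\}$ once the last two posterior-free terms are absorbed into $M_n\epsilon_n$. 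Taking $\E_0\Pi(\cdot\mid\mathbb{D}_n)$, splitting off $A_n^c$ and $B_n^c$ where the posterior probability is trivially at most one, and invoking Theorem~\ref{thm:l1concentration} with $G^\ast=G_n$ (whose hypotheses hold here, since a bounded density verifies Assumption~\ref{assumption:predictor} and $J^d(\log n)/n\to0$ is assumed) yields $\E_0\Pi(\pnorm{f^\ast-f_0}{1,\lambda}>LM_n\epsilon_n\mid\mathbb{D}_n)\to0$, which is the claim after rescaling $M_n$.
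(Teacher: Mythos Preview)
Your proposal is correct and follows essentially the same strategy as the paper: both proofs bridge the empirical and target metrics via a piecewise-constant approximant of $f_0$, compare the block weights $G_n(I_{\bj})$ with the deterministic ones on a high-probability event obtained from binomial concentration (your Chernoff step is the content of Lemma~\ref{N_j bounds}), and then invoke Theorem~\ref{thm:l1concentration} with $G^\ast=G_n$. The only cosmetic differences are that the paper uses the data-dependent approximant $\bar f_{0,J}$ with block heights $N_{\bj}^{-1}\sum_{i:\bX_i\in I_{\bj}}f_0(\bX_i)$ and passes first to $\pnorm{\cdot}{1,G}$ before using $\pnorm{\cdot}{1,G}\asymp\pnorm{\cdot}{1,\lambda}$, whereas you go directly to $\pnorm{\cdot}{1,\lambda}$ with the deterministic $f_{0,J}$ from Lemma~\ref{approximation}; neither choice changes the substance of the argument.
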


Next, we shall construct a Bayesian test for the multivariate coordinatewise monotonicity.  A natural Bayesian test is based on the posterior probability of the region under the null hypothesis, that is, reject the hypothesis if $\Pi(f\in \mathcal{M}|\mathbb{D}_n)$ is less than $0.5$. However, such a test cannot be consistent since, non-monotone functions will also lie in any neighborhood of a monotone function, so posterior consistency does not imply that the test will be consistent. In numerical experiments, we observe that the Lebesgue $\LL_1$-distance between a sample from the unrestricted posterior and the set $\mathcal{M}$ is often positive for sample size up to $1000$. To avoid a false  
rejection of the null hypothesis, we enlarge the class of monotone functions to include functions separated by a distance at most $\delta_n$, where $\delta_n$ decreases with $n$ appropriately. Then we consider the posterior probability of the enlarged set, $\Pi(\rho(f, \mathcal{M})\leq \delta_n|\mathbb{D}_n)$, where $\rho$ is a suitable metric, usually an $\LL_1$-distance. This idea was also pursued in
Salomond \cite{salomond2014adaptive} and Chakraborty and Ghosal \cite{chakraborty2021convergence} for Bayesian tests for monotonicity in the univariate case, respectively using the $\LL_\infty$- and an $\LL_1$-distance. Below, we consider random predictors obtained from a fixed distribution $G$ independently. 
The following result shows that the resulting test is consistent at the null and at all fixed alternatives, and the power goes to one at an alternative belonging to a H\"older smooth class $\mathcal{H}(\alpha, L)$ (see Definition C.4 of Ghosal and van der Vaart \cite{ghosal2017}) even if the alternative  approaches the null, provided that happens sufficiently slowly.

\begin{theorem}
	\label{thm:testingdet}
	Let Assumptions~{\rm \ref{assumption:predictor}--\ref{assum:prior}} hold for a random predictor with distribution $G$, and let $\rho$ stand for the $\LL_1(G)$-distance. 
	Let $\gamma\in(0,1)$ and $M_n\to\infty$ be predetermined and $J\asymp n^{1/(2+d)}$. Then for the test $\phi_n = \Ind\{\Pi(\rho(f,\mathcal{M}_J)\leq M_n n^{-1/(d+2)}|\mathbb{D}_n)<\gamma\}$, we have 
	\begin{enumerate}
		\item [{\rm (i)}] $\E_0 \phi_n \to 0$ for any fixed $f_0\in\mathcal{M}$; 
		\item [{\rm (ii)}] $\E_0 (1-\phi_n) \to 0$ for any fixed integrable $f_0\notin \overline{\mathcal{M}}$, where $\overline{\mathcal{M}}$ is the $\LL_1(G)$-closure of $\mathcal{M}$; 
		\item [{\rm (iii)}] $\sup\{\E_0(1-\phi_n):f_0 \in \mathcal{H}(\alpha, L), \rho(f_0, \mathcal{M}) > \tau_n(\alpha) \} \to 0$, where
		\begin{align*}
			\tau_n(\alpha) =\begin{cases} 
				C n^{-\alpha/(2+d)}, & \text{ for some $C>0$ if $\alpha < 1$}, \\
				CM_n n^{-1/(2+d)}, & \text{ for any $C>1$ if $\alpha = 1$.}
			\end{cases}
		\end{align*}
	\end{enumerate}
\end{theorem}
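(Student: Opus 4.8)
The plan is to reduce all three assertions to the $\LL_1(G)$-contraction of the \emph{unrestricted} posterior around $f_0$ (the intermediate fact underlying Theorem~\ref{thm:l1concentration}), combined with two elementary geometric inequalities and the approximation Lemma~\ref{approximation}. Write $r_n=n^{-1/(2+d)}$, so that with $J\asymp n^{1/(2+d)}$ one has $\epsilon_n=r_n$ and the test threshold equals $M_n r_n$. The engine is that $\mathcal{M}_J\subseteq\mathcal{M}$, which for every $f$ yields the two-sided bound $\rho(f_0,\mathcal{M})-\|f-f_0\|_{1,G}\le\rho(f,\mathcal{M})\le\rho(f,\mathcal{M}_J)\le\|f-f_{0,J}\|_{1,G}$, the last step holding for any fixed $f_{0,J}\in\mathcal{M}_J$. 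The upper bound drives the level claim (i), while the lower bound drives the power claims (ii) and (iii).

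For (i), since $f_0\in\mathcal{M}$, Lemma~\ref{approximation} furnishes $f_{0,J}\in\mathcal{M}_J$ with $\|f_0-f_{0,J}\|_{1,\lambda}\lesssim J^{-1}$, hence $\|f_0-f_{0,J}\|_{1,G}\lesssim r_n$ because $g$ is bounded; as the unrestricted posterior contracts at $r_n$, on the event $\{\|f-f_0\|_{1,G}\le Kr_n\}$ (of posterior probability $\to1$ in $\P_0$) we get $\rho(f,\mathcal{M}_J)\le(K+c)r_n<M_nr_n$ eventually, so $\Pi(\rho(f,\mathcal{M}_J)\le M_nr_n|\mathbb{D}_n)\to1$ and $\E_0\phi_n\to0$. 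For (ii), put $\eta=\rho(f_0,\overline{\mathcal{M}})>0$; on $\{\|f-f_0\|_{1,G}\le\eta/2\}$ the lower bound gives $\rho(f,\mathcal{M}_J)\ge\eta/2>M_nr_n$ eventually (as $M_nr_n\to0$), and since this event carries posterior mass $\to1$ we obtain $\Pi(\rho(f,\mathcal{M}_J)\le M_nr_n|\mathbb{D}_n)\to0$ and $\E_0(1-\phi_n)\to0$.

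Claim (iii) is the substantive step and the one I expect to be the main obstacle, because $f_0$ is no longer monotone and Theorem~\ref{thm:l1concentration} (whose Assumption~\ref{assumption:data} posits $f_0\in\mathcal{M}$) does not apply. Here I would analyze the conjugate Gaussian unrestricted posterior at a general $f_0\in\mathcal{H}(\alpha,L)$ directly by moment bounds and \emph{uniformly} over the class. Writing $\hat f$ for the posterior mean and $f_{0,J}$ for the blockwise design average of $f_0$, decompose $\|f-f_0\|_{1,G}\le\|f-\hat f\|_{1,G}+\|\hat f-f_{0,J}\|_{1,G}+\|f_{0,J}-f_0\|_{1,G}$. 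The posterior-spread term is $O(M_n'\sqrt{J^d/n})=O(M_n'r_n)$ with posterior probability $\to1$ for any $M_n'\to\infty$; the sampling term satisfies $\E_0\|\hat f-f_{0,J}\|_{1,G}\lesssim\sqrt{J^d/n}=r_n$ by a second-moment bound on the block averages, valid for the possibly non-Gaussian errors of Assumption~\ref{assumption:data} since only $\E\varepsilon=0$ and $\mathrm{Var}\,\varepsilon=\sigma_0^2$ enter; and the discretization bias is $\|f_{0,J}-f_0\|_{1,G}\le cLJ^{-\alpha}=cLn^{-\alpha/(2+d)}$, uniformly over $\mathcal{H}(\alpha,L)$. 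Consequently the unrestricted posterior concentrates within $C_1n^{-\alpha/(2+d)}$ of $f_0$ when $\alpha<1$ (with $C_1$ depending only on $L,\alpha,d$), and within $M_n'r_n$ for any $M_n'\to\infty$ when $\alpha=1$, in both cases uniformly over the class.

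Feeding this into $\rho(f,\mathcal{M}_J)\ge\rho(f_0,\mathcal{M})-\|f-f_0\|_{1,G}>\tau_n(\alpha)-\|f-f_0\|_{1,G}$ closes the argument. For $\alpha<1$ choose $C>C_1$; on the high-probability event $\rho(f,\mathcal{M}_J)>(C-C_1)n^{-\alpha/(2+d)}$, which exceeds $M_nr_n$ for large $n$ since $M_n=o(n^{(1-\alpha)/(2+d)})$. For $\alpha=1$ and any $C>1$, take $M_n'=(C-1)M_n/2\to\infty$; on $\{\|f-f_0\|_{1,G}\le M_n'r_n\}$ we get $\rho(f,\mathcal{M}_J)>CM_nr_n-(C-1)M_nr_n/2=\tfrac12(C+1)M_nr_n>M_nr_n$, using $(C+1)/2>1$. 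In both cases the posterior probability of $\{\rho(f,\mathcal{M}_J)\le M_nr_n\}$ tends to $0$ uniformly over the separated Hölder alternatives, giving $\sup\E_0(1-\phi_n)\to0$. The crux is to obtain the spread, sampling, and bias bounds of the previous paragraph \emph{uniformly} over $\mathcal{H}(\alpha,L)$ under misspecified (non-Gaussian) errors, and to track the constants finely enough that every $C>1$ suffices at $\alpha=1$.
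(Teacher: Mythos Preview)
Your proposal is correct and follows essentially the same approach as the paper: both hinge on the sandwich $\rho(f_0,\mathcal{M})-\|f-f_0\|_{1,G}\le\rho(f,\mathcal{M}_J)\le\|f-f_{0,J}\|_{1,G}$ and then supply the needed $\LL_1(G)$-concentration of the unrestricted posterior at $f_0$---via the intermediate estimate in Theorem~\ref{thm:l1concentration} for (i), martingale convergence $\|f_0-f_{0,J}\|_{1,G}\to0$ for consistency in (ii), and the H\"older bias bound $\|f_0-f_{0,J}\|_{1,G}\lesssim J^{-\alpha}$ for (iii). Your spread/sampling/bias decomposition and attention to uniformity in (iii) spell out what the paper compresses into ``proceeding as in the proof of Theorem~\ref{thm:l1concentration}''; the one point you assert without argument is consistency at a general integrable $f_0$ in (ii), which the paper obtains by the martingale convergence theorem and which your own decomposition in the (iii) paragraph also covers.
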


The separation rate $n^{-\alpha/(2+d)}$ appearing above for consistency at smooth alternatives is weaker than the corresponding rate  $n^{-\alpha/(2\alpha+d)}$ for estimation. This is because the value of $J\asymp n^{1/(2+d)}$ is optimal for estimating monotone functions, but is suboptimal for estimating $\alpha$-smooth functions. The problem can be avoided simultaneously for all $\alpha\le 1$ by putting a prior on $J$ and using a larger enlargement in terms of the weaker Hellinger distance on the density 
\begin{align}
	\label{eq:density}
	p_{f,\sigma}(\bx,y)=(\sigma \sqrt{2\pi})^{-1} \exp [ -(y-f(\bx))^2/(2\sigma^2)]
\end{align}
of $(\bX,Y)$ (with respect to the product of $G$ and the Lebesgue measure) with size dependent on the random $J$ drawn from its posterior distribution. In this case, the posterior sampling is more involved as the posterior probabilities of each value of $J$ also need to be obtained, which involves computations of a large matrix and its determinant, and a stronger separation is needed in terms of the weaker Hellinger metric.

\begin{theorem}
	\label{thm:testingadaptive}
	Let $\sigma$ be known, Assumptions~{\rm \ref{assumption:predictor}--\ref{assum:prior}} hold for a random predictor with distribution $G$, 
	and Lebesgue density $g$ bounded away from zero. Assume $\varepsilon$ is sub-Gaussian.
	Let $\rho$ stand for the Hellinger metric on the density of $(\bX,Y)$ induced on the regression function, that is, 
	\begin{align} 
		\label{Hellinger metric}
		\rho^2(f_1,f_2)=2\big \{1-(2\pi \sigma^2)^{-1/2} \int \exp[-(f_1(\bx)-f_2(\bx))^2/(8\sigma^2)]dG(\bx)\big\}.
	\end{align}
	Let $J$ be given a prior satisfying \eqref{prior:J}. Consider the test $$
	\phi_n = \Ind\{\Pi(\rho(f,\mathcal{M}_J)\leq M_0 \sqrt{(J^d \log n) / n}|\mathbb{D}_n) <\gamma \},$$ for a predetermined $\gamma\in(0,1)$ and a sufficiently large $M_0>0$. Assume that $f_0$ is bounded. Then 
	\begin{enumerate}
		\item [{\rm (i)}] for any fixed $f_0\in\mathcal{M}$, $\E_0 \phi_n \to 0$; 
		\item [{\rm (ii)}] for any fixed $f_0$ integrable on $[0,1]^d$, and $f_0\notin \overline{\mathcal{M}}$, $\E_0(1-\phi_n) \to 0$, where $\overline{\mathcal{M}}$ is the $\LL_1(G)$-closure of $\mathcal{M}$;
		\item [{\rm (iii)}] for alternatives in the H\"older function class, we have for a sufficiently large constant $C>0$, 
		\begin{align*}
			\sup \{ \E_0(1-\phi_n): f_0\in \mathcal{H}(\alpha,L), \rho(f_0, \mathcal{M})>C(n/\log n)^{-\alpha/(1+2\alpha)} \} \to 0.
		\end{align*}
	\end{enumerate}
\end{theorem}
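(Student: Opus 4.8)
The plan is to reduce all three claims to a single adaptive posterior contraction statement for the \emph{unrestricted} posterior on $\cup_J \cF_J$, measured in the Hellinger metric $\rho$ on the induced densities $p_{f,\sigma}$. The key observation is that for any $f\in\cF_J$, since $\mathcal{M}_J\subset\mathcal{M}$, the triangle inequality yields the two-sided bound
\begin{align*}
\rho(f_0,\mathcal{M})-\rho(f,f_0)\;\le\;\rho(f,\mathcal{M}_J)\;\le\;\rho(f,f_0)+\rho(f_0,\mathcal{M}_J).
\end{align*}
Thus, once I show that the unrestricted posterior concentrates on $\{\rho(f,f_0)\le C\epsilon_n\}$ for the appropriate adaptive rate $\epsilon_n$, the test statistic $\rho(f,\mathcal{M}_J)$ is pinned down up to $\rho(f_0,\mathcal{M})$ and the one-sided bias $\rho(f_0,\mathcal{M}_J)$, and each part follows by comparing with the threshold $M_0\sqrt{(J^d\log n)/n}$.

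The core step is to establish the adaptive Hellinger contraction rate through the general theory of posterior contraction for density estimation (Ghosal and van der Vaart \cite{ghosal2017}), applied to the prior on $J$ from \eqref{prior:J} together with the Gaussian coefficient prior \eqref{prior theta}. I would verify the three standard ingredients, summed over $J$ against the tail bound \eqref{prior:J}: (a) a prior-mass lower bound on Kullback--Leibler-type neighborhoods of $p_{f_0,\sigma}$, obtained by approximating $f_0$ by a step function $f_{0,J}\in\cF_J$ (at rate $J^{-\alpha}$ for $\alpha$-smooth $f_0$, and via Lemma~\ref{approximation} for bounded monotone $f_0$) and then bounding the Gaussian prior mass that the block heights lie within $\sqrt{(J^d\log n)/n}$ of those of $f_{0,J}$; (b) a sieve with log-covering number of order $J^d\log n$, using that each of the $J^d$ block heights ranges over a compact interval; (c) the exponential tail \eqref{prior:J} to discard resolutions larger than the one selected by the data. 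Balancing the bias $J^{-\alpha}$ against $\sqrt{(J^d\log n)/n}$ at the selected $J$ yields $\epsilon_n(\alpha)\asymp(n/\log n)^{-\alpha/(2\alpha+d)}$, a fixed multiple of the test threshold, which is precisely the separation rate $\tau_n$ of part (iii). Because the Gaussian working model is \emph{misspecified} when the true error is non-Gaussian, the KL and variance computations in (a) must be carried out against the sub-Gaussian truth; here the sub-Gaussian assumption controls the tails of the log-likelihood ratio and keeps these neighborhoods comparable to $\|f-f_0\|_{2,G}^2/\sigma^2$, which by \eqref{Hellinger metric} is in turn equivalent to $\rho^2$ for bounded $f$.

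Granting the contraction statement, the three parts are short. For (i), $f_0\in\mathcal{M}$ is approximated by a \emph{monotone} step function $f_{0,J}\in\mathcal{M}_J$ through Lemma~\ref{approximation}, so $\rho(f_0,\mathcal{M}_J)\le\rho(f_0,f_{0,J})$ is of the order of the bias, and the upper bound above gives $\rho(f,\mathcal{M}_J)\le M_0\sqrt{(J^d\log n)/n}$ on the high-probability event; hence the posterior probability in the test exceeds $\gamma$ and $\E_0\phi_n\to0$. For (ii), $f_0\notin\overline{\mathcal{M}}$ forces $\rho(f_0,\mathcal{M})=\delta_0>0$, while the unrestricted posterior still contracts around the (non-monotone) $f_0$ since step functions approximate any bounded integrable function; the lower bound then gives $\rho(f,\mathcal{M}_J)\ge\delta_0-o(1)$, eventually exceeding the vanishing threshold, so the test rejects and $\E_0(1-\phi_n)\to0$. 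For (iii), contraction at $\epsilon_n(\alpha)$ and the lower bound give $\rho(f,\mathcal{M}_J)\ge\rho(f_0,\mathcal{M})-C'\epsilon_n(\alpha)$; taking the separation $\tau_n$ a sufficiently large multiple of $\epsilon_n(\alpha)$ makes this exceed the threshold uniformly, provided the contraction bound holds uniformly over the H\"older ball.

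The main obstacle I expect is establishing the adaptive Hellinger contraction \emph{uniformly} over $\mathcal{H}(\alpha,L)$ under the misspecified, merely sub-Gaussian error model. The prior-mass bound (a) must be made uniform in $f_0$ over the H\"older ball and must survive the misspecification, which requires uniform control of the second moment of the log-likelihood ratio in terms of $\|f-f_0\|_{2,G}$ using only sub-Gaussian tails; and the entropy/sieve construction must be assembled across all $J$ so that the excess prior mass on over-refined partitions is absorbed by \eqref{prior:J}. The non-uniqueness and less tractable geometry of the multivariate cone $\mathcal{C}$ noted before Proposition~\ref{prop:piece} does not enter directly, since I use only $\mathcal{M}_J\subset\mathcal{M}$ and the triangle inequality; it does mean, however, that the one-sided bias $\rho(f_0,\mathcal{M}_J)$ in part (i) must be supplied by the genuinely multivariate monotone approximation of Lemma~\ref{approximation} rather than by any univariate reduction.
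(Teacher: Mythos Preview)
Your triangle-inequality reduction matches the paper's, and the paper does invoke the general contraction machinery (Theorem~8.20 of \cite{ghosal2017}, essentially your steps (a)--(c)) to confine the posterior to $\{J\le J_n\}$. The point you are underplaying is that the test threshold $M_0\sqrt{(J^d\log n)/n}$ is \emph{random in $J$}: a single marginal Hellinger rate $\epsilon_n$ from the general theory does not by itself yield $\rho(f,\mathcal{M}_J)\le M_0\sqrt{(J^d\log n)/n}$ for part~(i), nor the reverse comparison for (ii)--(iii); one needs contraction \emph{conditionally on each $J$ the posterior charges}. The paper supplies this not through the entropy/test route but by a direct moment bound on the explicit conjugate posterior~\eqref{eq:posterior}: for every $J\le J_n$ it shows, via Markov's inequality and the decomposition $\mathrm{Var}(\theta_{\bj}\mid\mathbb{D}_n)+(\E(\theta_{\bj}\mid\mathbb{D}_n)-\theta_{0,\bj})^2$, that $\Pi(\|f-f_{0,J}\|_{2,G}>M_0\sqrt{(J^d\log n)/n}\mid\mathbb{D}_n,J)$ is uniformly small, invoking sub-Gaussianity only to control $\max_{J\le J_n}\max_{\bj}(\bar\varepsilon|_{I_{\bj}})^2$ by a union bound. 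This simultaneously resolves the misspecification obstacle you flag, since the conjugate formulas are purely algebraic and valid for any error law; the paper never needs a misspecified-KL version of the general theory at this step.

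For parts~(ii) and~(iii) the paper then switches to the $\LL_1(G)$-metric (equivalent to $\rho$ on bounded sets) and applies, respectively, Schwartz consistency (Theorem~6.23 of \cite{ghosal2017}) and the general rate theorem (Theorem~8.9), with the sieve $\bigcup_{J\le J_n}\{\sum_{\bj}\theta_{\bj}\Ind_{I_{\bj}}:|\theta_{\bj}|\le n\}$; the already established bound $\Pi(J>J_n\mid\mathbb{D}_n)\to 0$ is what caps the random threshold by a multiple of $\epsilon_n(\alpha)$. Your unified Hellinger-contraction plan would reach the same destination, but only after you supply this same $J$-control step and then compare the marginal rate with the $J$-dependent threshold---which effectively reproduces the paper's two-stage argument.
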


\begin{remark}\rm 
	In both results on testing, we can allow deterministic predictors with $\rho$  replaced by the $\LL_1(G_n)$-distance to derive properties (i) and (iii). This follows from a similar proof by obtaining posterior contraction with respect to the $\LL_1(G_n)$-metric using Theorem~8.26 of Ghosal and van der Vaart \cite{ghosal2017} for deterministic predictors. 
\end{remark}

\begin{remark}\rm 
	As the distribution $G$ of the random predictor is typically unknown, the tests used in Theorems~\ref{thm:testingdet} and \ref{thm:testingadaptive} are not generally computable. If $G$ admits a density also bounded away from $0$, then the $\LL_1(G)$-metric and the Hellinger metric given by \eqref{Hellinger metric} may be respectively replaced by the Lebesgue $\LL_1$-metric and by $\rho$ defined by 
	\begin{align}
	\label{Hellinger Lebesgue}
	\rho^2(f_1,f_2)=2\{1-(2\pi \sigma^2)^{-1/2} \int \exp[-(f_1(\bx)-f_2(\bx))^2/(8\sigma^2)]d\bx\}.
	\end{align}
	Then the conclusions of the theorems hold. For Theorem~\ref{thm:testingdet}, this follows by following the same arguments by using Part (iii) of Theorem~\ref{thm:l1concentration} instead of Part (ii). For Theorem~\ref{thm:testingadaptive}, we use the equivalence of the metrics \eqref{Hellinger metric} and \eqref{Hellinger Lebesgue} under the assumed condition and the equivalence of the projections. Moreover, the conclusion in Part (iii) of both theorems can be strengthened by replacing the H\"older class by the corresponding Sobolev class $\mathcal{W}(\alpha,L)$; see Definition C.6 of Ghosal and van der Vaart \cite{ghosal2017}. This is because the approximation rate $J^{-\alpha}$ for $\alpha$-smooth function by step function with $J$ intervals in each direction holds also for the more general Sobolev class, as the $\LL_2$-norm is stronger than the $\LL_1$-norm.  
\end{remark}


\section{Numerical results}\label{sec:simulation}

\subsection{Simulation for posterior contraction rate}

We conduct a numerical study to assess the finite sample performance of the projection posterior methods for the estimation of isotonic regression functions. We use the projection posterior sample mean as our Bayesian estimator and compare the empirical $\LL_1$-distance between our estimator and the true regression function with that of the least square estimator on data sets of different sizes.
We consider monotone regression functions:
\begin{itemize}
	\item $f_1(x_1, x_2) = x_1 + x_2$, 
	\item $f_2(x_1, x_2) = \exp\{x_1x_2\}$, 
	\item $f_3(x_1, x_2) = (x_1 + x_2)^2$, 
	\item $f_4(x_1, x_2) = \sqrt{x_1 + x_2}$, 
	\item $f_5(x_1, x_2) = (1 + \exp\{- 6(x_1+x_2-1)\})^{-1}$, 
	\item $f_6(x_1, x_2) = 0$. 
\end{itemize}
For each of sample size $n=100, 200,$ and $ 500$, and each regression function, 
we generate $20$ data sets from the true regression model $Y=f_0(\bX) + \varepsilon$ with $\bX$ uniformly distributed over $[0,1]^2$ and independent errors $\varepsilon\sim\N(0, 0.1^2)$.
Set $J = \ceil{n^{1/4} \log_{10}n}$, which is chosen slightly larger than the optimal one to get a better approximation in lower sample sizes. 
For each data set, we generate $M=1000$ unrestricted posterior sample functions. Then we compute the $\LL_1$- projection posterior, by the ``activeSet" function in R package ``isotone". With the projection posterior samples, we then compute the empirical $\LL_1$-distance of the projection posterior mean function and the data-generating regression function. For the least square estimator, we use the same piecewise constant representation of the regression functions to obtain a function estimator on the whole range of $\bX$ and to make a fair comparison with our method. The least squares isotonic estimator is obtained by using the R package ``isotonic.pen". We summarize the results 
in Table \ref{tab:l1rate}.

\begin{table}[!ht]
	\begin{center}
		\caption{The Lebesgue $\LL_1$-distance between the Bayesian projection posterior mean regression function (BP) and the true regression function and between 
			the least squares isotonic regression function (LS) and the true one with standard deviations across all data sets marked in the parentheses.}
		\label{tab:l1rate}
		\begin{tabular}{|c|cc|cc|cc|}
			\hline
			& \multicolumn{2}{c|}{$n=100$} & \multicolumn{2}{c|}{$n=200$} & \multicolumn{2}{c|}{$n=500$}\\
			& BP & LS & BP & LS  & BP & LS \\
			\hline
			\multirow{2}{*}{$f_1$} & 0.054  & 0.059 & 0.045 & 0.050 & 0.034 & 0.041\\
			&(0.003) &(0.005) &(0.003) & (0.003) &  (0.002) & (0.002)\\
			\hline
			\multirow{2}{*}{$f_2$} & 0.049& 0.051 & 0.040  & 0.043  & 0.030 & 0.034 \\
			&(0.004) &(0.006) & (0.004) & (0.004) &  (0.002) & (0.002)\\ \hline
			\multirow{2}{*}{$f_3$} & 0.085 & 0.089 & 0.072 & 0.074  & 0.055  & 0.058 \\
			& (0.006)&(0.011)& (0.004)& (0.004) &  (0.002) & (0.002)\\ \hline
			\multirow{2}{*}{$f_4$} & 0.040  & 0.045 & 0.032  & 0.038 & 0.024  & 0.030 \\
			& (0.003) & (0.004) & (0.003) & (0.004)&  (0.002) &  (0.002)\\ \hline
			\multirow{2}{*}{$f_5$} & 0.051 & 0.052  & 0.041 & 0.044  & 0.032  & 0.044\\
			& (0.005) & (0.006) &  (0.003) & (0.002) &  (0.002) &  (0.002)\\ \hline
			\multirow{2}{*}{$f_6$} & 0.032 & 0.021 & 0.026  & 0.018  & 0.021  & 0.012 \\
			&  (0.006) & (0.009)&  (0.004) &  (0.007) &  (0.004) &  (0.003)\\
			\hline
		\end{tabular}
	\end{center}
\end{table}
We can see from the table the Bayesian projection posterior estimator has a smaller $\LL_1$-error than the least squares estimator except for the last case of a constant function.

\subsection{Simulation for Bayesian monotonicity testing}

To test for $H_0: f_0 \in \mathcal{M}$, we choose $J=\ceil{n^{1/4}}$, $\gamma=0.5$ and $M_n=a(\log n)^b$, where $a$ and $b$ are two parameters to be determined. 
We run the procedure on several datasets of different sizes with both coordinatewise increasing and nonincreasing regression functions.
Then we obtain the posterior samples of $\rho(f, \mathcal{M}_J)$, denoted by $d$. We fit a linear model of $\log (d n^{1/4})$ over $\log\log n$ to find the estimates of $\log a$ and b, which leads to $a=0.237$ and $b=0.234$. In the following simulation, we will choose $M_n = 0.237(\log n)^{0.234}$.

Since a test, frequentist or Bayesian, for multivariate monotonicity does not seem to exist in the literature before, we consider the following hypothesis testing procedure as the baseline method. We confine to the normal linear regression model $Y=\beta_0 + \beta_1 X_1 + \beta_2 X_2 +\varepsilon_i$, $i=1,\ldots, n$. The hypothesis testing of multivariate monotonicity for affine  functions becomes
\begin{align*}
	H_0: \beta_1\ge 0 \text{ and }\beta_2 \ge 0,  \text{ against }  H_1: \beta_1 < 0 \text{ or }\beta_2 < 0.
\end{align*}
Given the significance level $\eta = 0.05$, we use the Bonferroni adjustment since we have only two parameters to be tested. We reject the null hypothesis when any one of the t-values of $\beta_1$ and $\beta_2$ smaller than $t_{n-3, 1-\eta/2}$. 
To study the level of these two procedures, we consider functions, $f_1, \ldots, f_6$ used in the last section. For the comparison of the power performance, we consider the following nonincreasing functions on $[0,1]^2$:
\begin{itemize}
	\item $f_7(x_1, x_2) = (x_1 + x_2 - 1)^2.$ 
	\item $f_8(x_1, x_2) = 2(x_1 +x_2 -1)^3 - (x_1 + x_2 - 1).$ 
	\item $f_9(x_1, x_2) = (x_1 +x_2 -1)^3 - 0.5(x_1 + x_2 - 1).$ 
	\item $f_{10}(x_1, x_2) = \sin((x_1 + x_2)\pi).$ 
	\item $f_{11}(x_1, x_2) = x_1 - x_2.$ 
	\item $f_{12}(x_1, x_2) = \exp\{ -10(x_1+x_2-1)^2\} +x_1 + x_2.$ 
\end{itemize}
Even though the linear model is misspecified, it can summarize the overall trend of the regression function through the sign of the coefficients, and hence is appropriate. We also consider fitting a nonparametric regression using piecewise linear functions and test for the linear hypothesis that the slope coefficients on each piece in each direction are all nonnegative. Specifically, we take $J=3$ and take the partition ${I_{\bj}}$ for $\bj=(1,1),\ldots, (3,3)$. On each $I_{\bj}$, we fit a linear model and test whether any t-value of the slope coefficients $\beta_{1, \bj}$ and $\beta_{2,\bj}$ is smaller than $t_{N_{\bj}-3, 1-\eta/18}$ by the Bonferroni adjustment.

We generate $200$ datasets for each sample size $n=100, 200$, and $500$. The predictors $\bm{X}$ and $\varepsilon$ are generated in the same way as in the last subsection. For the Bayesian procedure, we generate $200$ posterior samples for each dataset and project each posterior sample to the monotone function class $\mathcal{M}$, denoting the projection posterior sample as $f^{\ast}$. Then $\rho_n(f, \mathcal{M})$ is obtained by computing $\rho_n(f, f^{\ast})$, where $\rho_n$ is the empirical $\LL_1$-distance. The results are summarized in Tables \ref{tab:level} and \ref{tab:power}.

\begin{table}[!ht]
	\begin{center}
		\caption{Percentage of rejections to the null hypothesis of Bayesian projection posterior procure (BP), linear regression procedure (LR), and piecewise linear fitting (PL) when the true regression functions are coordinatewise increasing.}
		\label{tab:level}
		\begin{tabular}{|c|ccc|ccc|ccc|}
			\hline
			& \multicolumn{3}{c|}{$n=100$} & \multicolumn{3}{c|}{$n=200$} & \multicolumn{3}{c|}{$n=500$}\\
			& BP & LR & PL & BP & LR & PL & BP & LR & PL\\
			\hline
			$f_1$ & 0 & 0 & 0 & 0 & 0 & 0 & 0 & 0 & 0\\
			$f_2$ & 0.5 & 0 & 0.5 & 0 & 0 & 0 & 0 & 0 & 0\\
			$f_3$ & 0 & 0 & 0& 0 & 0 & 0 & 0 & 0 & 0\\
			$f_4$ & 0.5 & 0 & 0 & 0& 0 & 0 & 0 & 0 & 0\\
			$f_5$ & 0 & 0 & 0.5 & 0 & 0 & 0 & 0 & 0 & 0\\
			$f_6$ & 0 & 3 & 3 & 0 & 5 & 3.5 & 0 & 7 & 3\\
			\hline
		\end{tabular}
	\end{center}
\end{table}

\begin{table}[!ht]
	\begin{center}
		\caption{Percentage of rejections to the null hypothesis of Bayesian projection posterior procure (BP), linear regression procedure (LR), and piecewise linear fitting (PL) when the true regression functions are not coordinatewise increasing.}
		\label{tab:power}
		\begin{tabular}{|c|ccc|ccc|ccc|}
			\hline
			& \multicolumn{3}{c|}{$n=100$} & \multicolumn{3}{c|}{$n=200$} & \multicolumn{3}{c|}{$n=500$}\\
			& BP & LR & PL & BP & LR & PL & BP & LR & PL\\
			\hline
			$f_7$ & 64.5 & 8.5 & 73 & 93.5 & 10 & 99.5 & 100 & 10.5& 100\\
			$f_8$ & 100 & 84.5&83 & 100 & 98.5 & 100 & 100& 100 & 100\\
			$f_9$ & 35.5& 7.6& 28.5 & 96& 94.5 & 66.5 & 100 & 100 & 100\\
			$f_{10}$ & 100 & 100 & 100 & 100& 100 & 100& 100 & 100 & 100\\
			$f_{11}$ & 100 & 100 & 98.5 & 100 & 100 & 100& 100 & 100 & 100\\
			$f_{12}$ & 35 & 0& 55&89.5 & 0 &94.5& 100 & 0&100\\
			\hline
		\end{tabular}
	\end{center}
\end{table}

We can see from Tables \ref{tab:level} and \ref{tab:power} that all three methods can control the Type I error rate of the test to a low level, even though the linear regression model is misspecified in case $f_2$ to $f_5$. That is because the coefficients should be nonnegative when we project any coordinatewise nondecreasing function onto the linear function space. 
Noting that the null hypothesis is composite in the linear regression and the piecewise linear regression methods and the coefficients of the projected linear functions of $f_2$ to $f_5$ are all strictly greater than zero, 
it is thus reasonable that the results in table \ref{tab:level} looks conservative. However, in the case, $f_6$, where the slope coefficients are zero and on the boundary of the null hypothesis, the Bonferroni adjustment seems not that conservative, giving
an error rate very close to the nominal level even in the piecewise linear fitting where there are $18$ slope coefficients to be tested. The nonparametric Bayesian test we proposed controls the Type I error at a very low level, especially when the sample size is moderately large. We can further adjust the value of $M_n$ to make the type I error close to the nominal level $0.05$ and thus a higher power would be expected. The nonparametric Bayesian method and the piecewise linear fitting method both have high power, as they can detect all kinds of violations to coordinatewise monotonicity, global or local, as the sample size increases. However for some regression functions such as  $f_{12}$, where the is a small bump in the middle of the function graph, the linear regression totally breaks down as it focuses on the global nature. The same conclusion also applies in the case $f_7$. The proposed methods enjoy power enhancement when the signal-to-noise ratio increases. We can see this by comparing cases $f_8$ and $f_9$. From these two cases, we also notice that the proposed Bayesian method has a better capability of capturing the local violation than others.

\section{Proofs}\label{sec:proofs}

\begin{proof}[Proof of Proposition \ref{prop:piece}]
	For a given $h$, let $\bar{h} = \sum_{\bj\in[\bm{1}:\bJ]}  \lambda(I_{\bj})^{-1} \int_{I_{\bj}} h \d \lambda \cdot \Ind_{I_{\bj}}$.
	Clearly, $\bar{h}\in \mathcal{M}$ if $h\in \mathcal{M}$. 
	Since $f$ is constant on $I_{\bj}$, for every $\bx\in I_{\bj}$, 
	\begin{align}
		\bigg|\frac{\int_{I_{\bj}}h \d \lambda}{\lambda(I_{\bj}) } - f(\bx)\bigg|^p
		=\frac{\big|\int_{I_{\bj}} \left(h - f\right)\d \lambda\big|^p} { \lambda(I_{\bj})^{p} } 
		\leq  \frac{\int_{I_{\bj}}|h - f|^p \d \lambda}{ \lambda(I_{\bj})} ,
		\label{formula:contraction}
	\end{align}
	by Jensen's inequality. Taking integrals on both sides of \eqref{formula:contraction} over $I_{\bj}$, it follows that  
	$\int_{I_{\bj}} \abs{\bar{h} - f}^p \d \lambda \leq \int_{I_{\bj}} \abs{h-f}^p \d \lambda$. 
	Hence the monotone projection of $f\in \cF_J$ onto $\mathcal{M}$
	also belongs to $\cF_J$.  
	The existence of $f^{\ast}$ is ensured by the convexity and the closedness of $\mathcal{C}$ and the convexity of $\LL_p$-losses.

\end{proof}

\begin{proof}[Proof of Theorem \ref{thm:l1concentration}]
Since the posterior for $\sigma$ is consistent, it is sufficient to condition on the value of $\sigma$ lying in a small neighborhood of $\sigma_0$, unless $\sigma$ is known.  
Let $f_{0,J} = \sum_{\bj\in [\bm{1}:\bJ]} f_0(\bj/\bJ)\Ind_{I_{\bj}}$. Then $f_{0, J} \in \mathcal{M}_J$.
As $f^{\ast}$ is the $\LL_1(G^{\ast})$-projection of $f$ onto $\mathcal{M}_J$ and $f_0\in \mathcal{M}$,
	\begin{align}
		\label{eq:projection concentration}
		\|f^{\ast}-f_0\|_{1,G^{\ast}} & \leq \|f^{\ast}-f\|_{1,G^{\ast}} + \|f-f_{0,J}\|_{1,G^{\ast}} + \|f_{0,J} - f_0\|_{1,G^{\ast}} 
		\nonumber\\
		&\leq 2\|f-f_{0,J}\|_{1,G^{\ast}} +\|f_{0,J} - f_0\|_{1,G^{\ast}}.
	\end{align}
	By Lemma \ref{approximation}, $\|f_{0,J} - f_0\|_{1,G^{\ast}} \lesssim J^{-1}$ as $G^{\ast}(I_{\bj})\lesssim J^{-1}$ is assumed.
	Hence it suffices to bound $\|f-f_{0,J}\|_{1,G^{\ast}}$.
	
	Without loss of generality, we assume that $N_{\bj}>0$ for all $\bj$. 
	Let $\bar{f}_{0,J} = \sum_{\bj \in [\bm{1}:\bm{J}]} \theta_{0,\bj} \Ind_{I_{\bj}}$, where 
	$\theta_{0,\bj} = N_{\bj}^{-1} \sum_{i:\bX_{i}\in I_{\bj}}f_0(\bX_i)$. 
	Then Lemma~\ref{approximation} applied twice and the triangle inequality give $\|{f}_{0,J}-\bar f_{0,J}\|_{1,G^{\ast}}\lesssim J^{-1}$. 
	Therefore it suffices to show that 
	$\E_0\Pi(\|f-\bar{f}_{0,J}\|_{1, G^{\ast}} > M_n \sqrt{J^d/n} |\mathbb{D}_n ) \to 0$.
	
	Applying the Cauchy-Schwarz inequality first and then Markov's inequality, 
	\begin{eqnarray}
		\lefteqn{\Pi(\|f-\bar{f}_{0,J}\|_{1, G^{\ast}} > M_n \sqrt{J^d/n} |\mathbb{D}_n, \sigma )}\nonumber \\
		&& = \Pi(\sum_{\bj \in [\bm{1}:\bm{J}]} G^{\ast}(I_{\bj}) \abs{\theta_{\bj}-\theta_{0,\bj}} > M_n \sqrt{J^d/n} |\mathbb{D}_n, \sigma )\nonumber\\
		&& \leq \Pi(\sum_{\bj \in [\bm{1}:\bm{J}]} G^{\ast}(I_{\bj}) \abs{\theta_{\bj}-\theta_{0,\bj}}^2 > M_n^2 {J^d}/n |\mathbb{D}_n, \sigma )\nonumber\\
		&& \leq M_n^{-2} J^{-d} \sum_{\bj \in [\bm{1}:\bm{J}]} nG^{\ast}(I_{\bj}) \E [(\theta_{\bj}-\theta_{0,\bj})^2|\mathbb{D}_n, \sigma]. 
		\label{eq:posterior variation}
	\end{eqnarray}
	We decompose 
	\begin{align}
		\label{addone}
		\E[(\theta_{\bj}-\theta_{0,\bj})^2|\mathbb{D}_n, \sigma] = \text{Var}(\theta_{\bj}|\mathbb{D}_n)+(\E(\theta_{\bj}|\mathbb{D}_n)-\theta_{0,\bj})^2. 
	\end{align}
	We observe that
	\begin{align}
		\label{addtwo}
		\sum_{\bj \in [\bm{1}:\bm{J}]} n G^{\ast}(I_{\bj}) \text{Var}(\theta_{\bj}|\mathbb{D}_n, \sigma) \le \frac{\sigma^{2}}{\min\{1, \min_{\bj} \{\lambda_{\bj}^{-2}\}\}} \sum_{\bj \in [\bm{1}:\bm{J}]} \frac{n G^{\ast}(I_{\bj})}{N_{\bj} + 1}.
	\end{align}

	From \eqref{eq:posterior}, we know
	\begin{eqnarray}
	\label{postbias}
	\lefteqn{\sum_{\bj \in [\bm{1}:\bm{J}]} n G^{\ast}(I_{\bj}) (\E(\theta_{\bj}|\mathbb{D}_n) - \theta_{0,\bj})^2} \nonumber\\
	&& = \sum_{\bj \in [\bm{1}:\bm{J}]} n G^{\ast}(I_{\bj}) \left( \frac{N_{\bj} \bar{\varepsilon}|_{I_{\bj}} +  \lambda_{\bj}^{-2}\zeta_{\bj} - \theta_{0,\bj}\lambda_{\bj}^{-2}}{N_{\bj}+\lambda_{\bj}^{-2}} \right)^2 \nonumber \\
	&& \lesssim \sum_{\bj \in [\bm{1}:\bm{J}]} \frac{n G^{\ast}(I_{\bj}) N_{\bj}^2(\bar{\varepsilon}|_{I_{\bj}})^2 }{(N_{\bj} + 1)^2} + \sum_{\bj \in [\bm{1}:\bm{J}]} \frac{n G^{\ast}(I_{\bj}) }{(N_{\bj} + 1)^2}\nonumber\\
	&& \lesssim \sum_{\bj \in [\bm{1}:\bm{J}]} \frac{n G^{\ast}(I_{\bj})}{N_{\bj} + 1}
	\end{eqnarray}
	by noting that $\E[(\bar{\varepsilon}|_{I_{\bj}})^2|\bX, \sigma]= \sigma^2/N_{\bj}$. 
	Hence, the expectations of the expressions in \eqref{addtwo} and \eqref{postbias} are bounded by a constant multiple of $J^{d}$ in view of \eqref{abstract G*}. Combining these with \eqref{eq:posterior variation} and \eqref{addone}, it follows that 
	\begin{align}
	\label{newestimate}
	  \E \Pi(\|f-\bar{f}_{0,J}\|_{2, G^{\ast}} > M_n \sqrt{J^d/n} |\mathbb{D}_n, \sigma )  \lesssim M_n^{-2},
	\end{align}
and hence	the first part of the theorem is established.
	
	If $\max \{ N_{\bj}: \bj \in [\bm{1}:\bm{J}] \}\lesssim n/J^d$, then Lemma~\ref{sigma} ensures that the estimator and the posterior for $\sigma$ are consistent. For $G^\ast=G_n$, the condition \eqref{abstract G*} holds because $n G_n(I_{\bj})(N_{\bj}+1)^{-1} \le 1$. If $\bX_1,\ldots,\bX_n$ are i.i.d. with a bounded density $g$, then $\max \{ N_{\bj}: \bj \in [\bm{1}:\bm{J}] \}\lesssim n/J^d$ by Lemma~\ref{N_j bounds}, provided that $J^d (\log n)/n\to 0$. 	If $G^{\ast} = G_n$ for either random or deterministic predictors $\bX_i$, \eqref{addtwo} is bounded by $J^d$ up to some positive constant. If $\bX_1,\ldots,\bX_n$ are i.i.d. $G$, then owing to $N_{\bj}\sim \text{Bin}(n, G(I_{\bj}))$, we have that
	\begin{align}
		\E_0[(N_{\bj} + 1)^{-1}]=
		\begin{cases}
			\frac{1-(1-G(I_{\bj}))^{n+1}}{(n+1)G(I_{\bj})}, & \text{ if }G(I_{\bj})>0;\\
			 1, & \text{ if }G(I_{\bj})=0,
		\end{cases}
		 \label{reciprocal binomial}
	\end{align}
	so that $n G(I_{\bj}) \E [ (N_{\bj}+1)^{-1}]\le 1$, implying that  \eqref{abstract G*} holds for $G^*=G$. This completes the proof of the second part of the theorem. 
\end{proof}

\begin{proof}[Proof of Corollary \ref{corollary l1}]
    For $f^*$ the $\LL_1(G_n)$-projection, by the triangle inequality,
	\begin{align*}
	    \|f^{\ast} - f_0\|_{1,G}\le \|f^{\ast} - \bar{f}_{0,J}\|_{1,G} + \|\bar{f}_{0,J}-f_0\|_{1,G},
	\end{align*}
	where $\bar{f}_{0,J} = \sum_{\bj \in [\bm{1}:\bm{J}]} \theta_{0,\bj} \Ind_{I_{\bj}}$, with  
	$\theta_{0,\bj} = N_{\bj}^{-1} \sum_{i:\bX_{i}\in I_{\bj}}f_0(\bX_i)$. 
	From Lemma \ref{approximation},
	we know that $\|\bar{f}_{0,J} - f_0\|_{1,G} \lesssim J^{-1}$ under the assumption of bounded density.
	As $f^{\ast}$ is the $\LL_1(G_n)$-projection of $f$ onto $\mathcal{M}_J$,
	from Theorem \ref{thm:l1concentration}, 
	\begin{equation}\label{fast Gn}
		\E_0\Pi(\|f^{\ast} -\bar{f}_{0,J}\|_{1,G_n}>M_n \epsilon_n|\mathbb{D}_n)\to 0,
	\end{equation}
	since we also have $\|\bar{f}_{0,J} - f_0\|_{1,G_n} \lesssim J^{-1}$ by Lemmas~\ref{N_j bounds} and  \ref{approximation}. 
	Thus it suffices to show that
	\begin{align}\label{diff GGn}
		\E_0\Pi(\left|\|f^{\ast} -\bar{f}_{0,J}\|_{1,G}-\|f^{\ast} -\bar{f}_{0,J}\|_{1,G_n}\right|>M_n \epsilon_n|\mathbb{D}_n)\to 0.
	\end{align}
Clearly, we have 
	\begin{eqnarray}\label{diffGnG}
		\lefteqn{ \left|\|f^{\ast} -\bar{f}_{0,J}\|_{1,G}-\|f^{\ast} -\bar{f}_{0,J}\|_{1,G_n}\right|}\nonumber\\
		&& \le \sum_{\bj\in[\bm{1}:\bm{J}]} G_n(I_{\bj})|\theta^{\ast}_{\bj} - \theta_{0,\bj}|\cdot\max_{\bj}|G(I_{\bj})/G_n(I_{\bj})-1|.
	\end{eqnarray}
	Under the additional condition on the lower bound for $g$, Lemma~\ref{N_j bounds} implies that the last factor is $O_{P_0}(1)$. Thus \eqref{diffGnG} is bounded by a constant multiple of $\|f^{\ast} -\bar{f}_{0,J}\|_{1,G_n}$ on an event with $\P_0$-probability tending to $1$. 
Then this claim follows from Theorem \ref{thm:l1concentration}.
	As $g$ is bounded and bounded away from $0$,
$
\|f^{\ast} - f_0\|_{1,G}\asymp\|f^{\ast} - f_0\|_{1,\lambda}.
$, then the corollary follows.

\end{proof}

\begin{proof}[Proof of Theorem \ref{thm:testingdet}]
	(i) Since $\rho(f,\mathcal{M}_J)\le \|f-f_{0,J}\|_{1,G}$, the conclusion follows from Theorem~\ref{thm:l1concentration}. 
	
	(ii) By the definition of projection and the triangle inequality,
	\begin{align}
		\label{formula:alterinequ}
		\rho(f,\mathcal{M}_J)\geq \|f_0-f^{\ast}\|_{1,G} - \|f-f_0\|_{1,G} \geq \rho(f_0, \mathcal{M}_J) - \|f-f_0\|_{1,G}.
	\end{align}
	Thus by the triangle inequality, 
	\begin{eqnarray}   
		\lefteqn{ \Pi(\rho(f,\mathcal{M})\leq M_n n^{-1/(d+2)}|\mathbb{D}_n)} \nonumber \\
		&& \le	\Pi(\|f-f_0\|_{1,G}\geq \rho(f_0, \mathcal{M}_J) - M_n n^{-1/(d+2)}|\mathbb{D}_n).
		\label{test separation}	
	\end{eqnarray} 
	Since $\rho(f_0, \mathcal{M}_J)\ge \rho(f_0, \mathcal{M})$ and the latter is a fixed positive constant, to conclude the proof, it suffices to show that the posterior for $f$ is consistent at $f_0$ in the $\LL_{1}(G)$-metric. Let $\theta_{0,\bj} = \int_{I_{\bj}} f_0 \d G /G(I_{\bj})$ and then $f_{0,J}=\sum_{\bj} \theta_{0, \bj}\Ind_{I_{\bj}}$. By the martingale convergence theorem, $\|f_0 - f_{0,J}\|_{1,G} \to 0$. Proceeding as in the proof of Theorem~\ref{thm:l1concentration}, we conclude that 
	\begin{align}
		\label{formula:alterconsis}
		\E_0 \Pi(\|f - f_{0,J}\|_{1,G} >M_n \sqrt{J^d/n}|\mathbb{D}_n) \to 0, 
	\end{align}
	so posterior consistency holds in terms of the $\LL_1(G)$-distance.
	
	(iii) For $f_0\in\mathcal{H}(\alpha, L)$, we have $\|f_0 - f_{0,J}\|_{1,G} \lesssim J^{-\alpha}$. Together with \eqref{formula:alterconsis}, which is valid even when $f_0$ is not fixed, it follows that the $\LL_1(G)$-posterior contraction rate at $f_0$ is $\max\{\sqrt{J^d/n}, J^{-\alpha}\}\asymp n^{-\alpha/(2+d)}$ for the choice $J\asymp n^{1/(2+d)}$. For $\alpha<1$, the expression on the right hand side of  \eqref{test separation} is for large $n$ bounded by	$\Pi(\|f-f_0\|_{1,G}\geq C n^{-\alpha/(2+d)}/2 )\to_{P_0} 0$, since   
	$n^{-\alpha/(2+d)}\gg n^{-1/(2+d)}$. If $\alpha=1$, the corresponding bound for the event of interest reduces to 
	$\Pi(\|f-f_0\|_{1,G}\geq (C-1)M_n n^{-1/(d+2)}/2|\mathbb{D}_n )\to_{P_0} 0$. 
\end{proof}

\begin{proof}[Proof of Theorem \ref{thm:testingadaptive}]
	With $p_{f,\sigma}$ defined by \eqref{eq:density}, the Hellinger distance between $p_{f_1,\sigma}$ and $p_{f_2,\sigma}$ is  $\rho(f_1,f_2)$ and the Kullback-Leibler divergences are given by 
	\begin{align*}
		K(p_{f_0,\sigma}; p_{f,\sigma}) = \frac{1}{2\sigma^2}\|f-f_0\|_{2,G}^2, \qquad 
		V(p_{f_0,\sigma}; p_{f,\sigma}) = \frac{1}{\sigma^2}\|f-f_0\|_{2,G}^2.
	\end{align*}
	Thus the Kullback-Leibler ball $\{f: K(p_{f_0,\sigma}; p_{f,\sigma})\leq \epsilon^2, V(p_{f_0,\sigma}; p_{f,\sigma^2})\leq \epsilon^2\}$ contains the $\LL_2(G)$-ball $\{f: \|f-f_0\|_{2,G} \leq C\epsilon\}$ for some $C>0$, and hence to study posterior contraction at a true $f_0$, it suffices to lower bound the prior probability of the latter. Since $\|f_0 - f_{0,J}\|^2_{2,G} \leq (f_0(\bm{1}) - f_0(\bm{0}))\|f_0 - f_{0,J}\|_{1,G}$, to keep $\|f_0 - f_{0,J}\|_{2,G}$ within a targeted $\epsilon$ (which may or may not depend on $n$), $J$ should be sufficiently large to make $\|f_0 - f_{0,J}\|_{1,G}\le c\epsilon^2$ for some sufficiently small $c>0$. If a value $\bar J$, possibly depending on $n$, achieves this, then using \eqref{prior:J}, we can lower bound the required $\LL_2(G)$-prior concentration by 
	\begin{eqnarray*}
		\lefteqn{ \Pi(\bar J)\Pi(\|f-f_{0,\bar J}\|_{2,G} \leq C\epsilon|J=\bar J)} \\ 
		&&\geq \Pi(\bar J)\Pi(\cap_{j=1}^{\bar J} \{|\theta_{\bj} - \theta_{0,\bj}| \leq C_1 \epsilon^2\}) \\ 
		&& \gtrsim \exp \{-b_2\bar J^d\log \bar J - C_2 \bar J^d\log (1/\epsilon)\}
	\end{eqnarray*}
	for some constant $C_1,C_2>0$. Let $J_n$ stand for a sufficiently large multiple of $(n\epsilon^2)^{1/d}$. There are two situations to be considered. If $\epsilon>0$ is fixed at an arbitrarily small number, then $\bar J$ may be chosen as a sufficiently large constant. Then the lower bound for prior concentration in $\epsilon$-neighborhood is a fixed positive number.  Hence it follows that $\Pi (J\ge J_n)/ \Pi(\|f-f_{0}\|_{2,G} \leq C\epsilon)=o(e^{-2 n\epsilon^2})$, and hence by Theorem~8.20 of Ghosal and van der Vaart \cite{ghosal2017}, $\Pi (J> J_n|\mathbb{D}_n)\to_{P_0} 0$. If $\epsilon=\epsilon_n\to 0$ is chosen so that $n\epsilon_n^2\to \infty$ and the corresponding $\bar J=\bar J_n$ satisfies $\log \bar J_n\lesssim \log n$, and it holds that $\log (1/\epsilon_n)\lesssim \log n$ and $\bar J_n^d\log n\lesssim n\epsilon_n^2$, then for the choice  $J_n=L(n\epsilon_n^2/\log n)^{1/d}$ for some sufficiently large constant $L>0$, it again follows that $\Pi (J\ge J_n)/ \Pi(\|f-f_{0}\|_{2,G} \leq C\epsilon_n)=o(e^{-2 n\epsilon_n^2})$, and hence again  by Theorem~8.20 of Ghosal and van der Vaart \cite{ghosal2017}, $\Pi (J> J_n|\mathbb{D}_n)\to_{P_0} 0$.
	
	First we establish an auxiliary estimate essential to prove the assertions (i), (ii) and (iii). We claim that for any bounded measurable $f_0$ (not necessarily monotone or smooth) and a given $\delta>0$, if 
	$\log J_n \lesssim \log n$,
	there exists a sufficiently large constant $M_0>0$ such that  
	\begin{align}
		\label{formula:p1}
		\E_0 \Pi(\|f-f_{0,J}\|_{2,G}\geq M_0 \sqrt{J^d(\log n) /n}, J\leq J_n |\mathbb{D}_n)<\delta,
	\end{align}
	when $n$ large enough. The posterior probability in the expectation of the last display can be written as 
	\begin{align}
		\label{formula:p2}
		\sum_{J=1}^{J_n} \Pi(J|\mathbb{D}_n) \Pi\big(\sum_{\bj \in [\bm{1}: \bm{J}]} (\theta_{\bj} - \theta_{0,\bj})^2 G(I_{\bj}) \geq M_0^2 J^d (\log n) / n \big|\mathbb{D}_n \big).
	\end{align}
	By Markov's inequality and Assumption~\ref{assum:prior}, 
	\begin{multline*}
		\max_{J\le J_n}\Pi\big(\sum_{\bj\in [\bm{1}: \bm{J}_n]} (\theta_{\bj} - \theta_{0,\bj})^2 G(I_{\bj}) \geq M_0^2 J^d (\log n) / n \big|\mathbb{D}_n \big) \\
		\leq \max_{J\le J_n} \frac{n}{M_0^2 J^d \log n} \sum_{\bj\in [\bm{1}: \bm{J}_n]}  G(I_{\bj})\big[\text{Var}(\theta_{\bj}|\mathbb{D}_n) + (\E(\theta_{\bj}|\mathbb{D}_n) -\theta_{0,\bj})^2\big]  
	\end{multline*}
	which is bounded in probability by a constant multiple of 
	\begin{align} 
		\label{addfour} 
		\max_{J\le J_n} \frac{n}{M_0^2 J^d \log n} \sum_{\bj\in [\bm{1}: \bm{J}]}  G(I_{\bj})[(N_{\bj}+\lambda_{\bj}^{-2})^{-1}+(\bar{Y}|_{I_{\bj}}-\theta_{0,\bj})^2 ]
	\end{align}
    It is clear that $G(I_{\bj})\asymp J^{-d}$.
	By Lemma \ref{N_j bounds},
	\begin{align*}
	    \P_0(\bigcap_{J=1}^{J_n}\{C_1n/J^d \le \min_{\bj} N_{\bj}
	    \le \max_{\bj} N_{\bj} \le C_2 n/J^d\}) \to 1,
	\end{align*}
	provided $n/J_n \gg \log J_n$, for two constant $C_1$ and $C_2>0$.
	Then $N_{\bj}\asymp n/J^d$ uniformly for all $\bj\le \bJ$ and $J$.
	By the union bound of sub-Gaussian variables (see van der Vaart and Wellner \cite{van1996weak}, Section 2.2), we have $(\bar{\varepsilon}|_{I_{\bj}})^2 \lesssim (J^d \log n) /n$ with arbitrarily high probability, provided $\log J_n \lesssim \log n$.
	As $f_0$ is bounded, we have
	$|N_{\bj}^{-1} \sum_{i:\bX_i\in I_{\bj}}f_0(\bX_i) - \theta_{0,\bj}|^2\lesssim J^d (\log n )/n$ uniformly for all $\bj$ and $J$ with high probability.
	Thus we establish the claim in \eqref{formula:p1}. 
	
	To prove (i), we observe that the $\LL_2(G)$-approximate rate is $J^{-1/2}$, and thus $\epsilon_n\asymp \bar{J}_n^{-1/2} \asymp (n/\log n)^{-1/2(d+1)}$, so $J_n \asymp (n/\log n)^{1/(d+1)}$, and  $\Pi (J> J_n|\mathbb{D}_n)\to_{\P_0} 0$. Since  $\rho(f,\mathcal{M}_J)\lesssim \rho(f,\mathcal{M}) \le \rho(f,f_0)$, the claim follows from \eqref{formula:p1}.  
	
	To prove (ii), we choose $\epsilon>0$ arbitrarily small but fixed. By the martingale convergence theorem, $\|f_0-f_{0,J_0}\|_{1,G}<\epsilon$ for any sufficiently large $J_0$. Hence $J_n$ can be chosen a sufficiently small multiple of $(n/\log n)^{1/d}$ such that $\Pi (J> J_n|\mathbb{D}_n)\to_{\P_0} 0$. Let $\cF_n^*=\bigcup_{J=1}^{J_n} \{\sum_{\bj\in [\bm{1}, \bm{J}_n]} \theta_{\bj} \Ind_{I_{\bj}}: |\theta_{\bj}|\le n \}$. Then $\Pi (f\not\in \cF_n^*)=o(e^{-cn})$ for some constant $c>0$, and the $\LL_1(G)$-covering number of $\cF_n^*$ is bounded by $J_n^d (2n/\epsilon)^{J_n^d}$. Thus the $\epsilon$-metric entropy is bounded by $J_n^d \log n\le n \epsilon^2$. Hence the posterior distribution at $f_0$ is consistent with respect to the $\LL_1(G)$-metric, by an application of the Schwartz posterior consistency theorem (cf., Theorem~6.23 of Ghosal and van der Vaart \cite{ghosal2017}). Therefore, as $\rho(f_0, \mathcal{M}_J)$ is bounded by a positive fixed constant from below, by \eqref{formula:alterinequ}, it follows that $\Pi (\rho(f,\mathcal{M}_J)\le M_0 \sqrt{(J^d \log n)/n}|\mathbb{D}_n)\to_{\P_0} 0$.

	To prove Part (iii), we observe by Lemma~\ref{approximation} that the approximation rate at an $f_0\in \mathcal{H}(\alpha, L)$ is $J^{-\alpha}$, so that $\bar J_n\asymp \epsilon_n^{-1/\alpha}$ and $\epsilon_n \asymp (n/\log n)^{-\alpha/(2\alpha+d)}$ and $J_n \asymp (n/\log n)^{1/(2\alpha+d)}$. Using the sieve $\cF_n^*$ as defined above with this choice of $J_n$, it follows that $\Pi (f\not\in \cF_n^*)=o(e^{-Cn\epsilon_n^2})$ for a given constant $C>0$. The $\epsilon_n$-metric entropy is bounded by $J_n^d \log n\lesssim n \epsilon_n^2$. Hence it follows from Theorem~8.9 of Ghosal and van der Vaart \cite{ghosal2017} that the $\LL_1(G)$-posterior contraction rate is $(n/\log n)^{-\alpha/(2\alpha+d)}$. Thus, as $\rho(f_0, \mathcal{M}_J) \ge C(n/\log n)^{-\alpha/(2\alpha+d)}$ for a sufficiently large constant $C>0$, from \eqref{formula:alterinequ} and the probabilistic bound $(n/\log n)^{1/(2\alpha+d)}$ for $J$, the conclusion follows. 
\end{proof}

\def\thesection{\Alph{section}}
\setcounter{section}{0} 
\section{Auxiliary results}
\label{sec:appendix}

\begin{lemma}
	\label{N_j bounds}
	If $\bX_1,\ldots,\bX_n$ are a random sample from a density $g$ on $[0,1]$, $J\to\infty$, and $n/J^d \gg \log J$. If $g$ is bounded, then for some constants $C>0$, 
	$$\P_0 (\max \{ N_{\bj}: \bj\in [\bm{1}: \bm{J}] \} \le C n/J^d )\to 1.$$
	If $g$ is bounded away from zero, then for some constant $C'>0$, we have
	$$\P_0 (\min \{ N_{\bj}: \bj\in [\bm{1}: \bm{J}] \} \ge C' n/J^d )\to 1.$$
\end{lemma}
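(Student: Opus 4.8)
The plan is to view each block count as a binomial random variable and combine a Chernoff tail bound with a union bound over the $J^d$ blocks. Since $\bX_1,\ldots,\bX_n$ are i.i.d.\ with density $g$, each $N_{\bj}=\sum_{i=1}^n \Ind\{\bX_i\in I_{\bj}\}$ is distributed as $\mathrm{Bin}(n,p_{\bj})$ with $p_{\bj}=G(I_{\bj})=\int_{I_{\bj}}g\,\d\lambda$. Because every block has $\lambda(I_{\bj})=J^{-d}$, a bound $g\le \bar g$ yields $\mu_{\bj}:=\E_0 N_{\bj}=np_{\bj}\le \bar g\,n/J^d$, while $g\ge\underline g>0$ yields $\mu_{\bj}\ge \underline g\,n/J^d$. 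Thus both assertions reduce to showing that every $N_{\bj}$ concentrates around its mean, which is of order $n/J^d$, uniformly over the blocks.

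For the upper bound I would invoke the multiplicative Chernoff inequality $\P_0(N_{\bj}\ge t)\le (e\mu_{\bj}/t)^{t}$, valid for $t\ge \mu_{\bj}$. Taking $t=Cn/J^d$ with $C\ge e^2\bar g$ forces $e\mu_{\bj}/t\le e^{-1}$ for every $\bj$, so $\P_0(N_{\bj}\ge Cn/J^d)\le e^{-Cn/J^d}$ uniformly. A union bound over the $J^d$ blocks then gives
\begin{align*}
\P_0\big(\max_{\bj\in[\bm{1}:\bm{J}]} N_{\bj}\ge Cn/J^d\big)\le J^d e^{-Cn/J^d}=\exp\{d\log J-Cn/J^d\}.
\end{align*}
For the lower bound I would use the lower-tail estimate $\P_0(N_{\bj}\le (1-\delta)\mu_{\bj})\le \exp(-\mu_{\bj}\delta^2/2)$ with $\delta=1/2$. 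Since $\mu_{\bj}\ge \underline g\,n/J^d$, the event $\{N_{\bj}\le C'n/J^d\}$ with $C'=\underline g/2$ is contained in $\{N_{\bj}\le \mu_{\bj}/2\}$, so $\P_0(N_{\bj}\le C'n/J^d)\le e^{-\mu_{\bj}/8}\le e^{-\underline g n/(8J^d)}$, and a union bound yields
\begin{align*}
\P_0\big(\min_{\bj\in[\bm{1}:\bm{J}]} N_{\bj}\le C'n/J^d\big)\le \exp\{d\log J-\underline g n/(8J^d)\}.
\end{align*}

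The main (and only mild) obstacle is to verify that both exponents tend to $-\infty$, which is precisely where the hypothesis $n/J^d\gg \log J$ enters. Writing the first exponent as $\log J\,(C\,n/(J^d\log J)-d)$ and using $n/(J^d\log J)\to\infty$ shows it diverges to $-\infty$; the second is handled identically with $C$ replaced by $\underline g/8$. Hence both probabilities vanish, establishing the claimed high-probability bounds on $\max_{\bj}N_{\bj}$ and $\min_{\bj}N_{\bj}$.
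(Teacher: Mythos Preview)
Your proof is correct and follows essentially the same route as the paper: recognize $N_{\bj}\sim\mathrm{Bin}(n,G(I_{\bj}))$, apply a Chernoff/large-deviation tail bound, and take a union bound over the $J^d$ blocks, using $n/J^d\gg\log J$ to kill the union cost. One cosmetic slip: in your last paragraph the factorization of the exponent has a sign flipped (it should read $\log J\,(d - Cn/(J^d\log J))$, which indeed $\to -\infty$), but the conclusion is unaffected.
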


\begin{proof}
	For every $\bj$, $N_{\bj}\sim \text{Bin}(n, G(I_{\bj}))$. If $g$ is bounded from above by $a$, then $G(I_{\bj})$ is bounded by $a/J^d$. Following the same argument of the proof of Lemma A.2 of Chakraborty and Ghosal \cite{chakraborty2020coverage}, we obtain that $\P_0(N_{\bj} > Cn/J^d) \le 2\exp\{ - C' n/J^d \}$ by large deviation probability. By the condition $n/\log J \gg J^d $, we have $\P_0(\max N_{\bj} > Cn/J^d) \le 2\exp\{ - C'' n/J^d \}\to 0$. The second claim follows from a similar argument.
\end{proof}

\begin{lemma}
	\label{approximation}

	
	Let $G^*$ be a probability measure on $[0,1]^d$ such that $\max \{G^*(I_{\bj}): \bj\in [\bm{1}: \bm{J}]\}\lesssim J^{-d}$. For a given $f:[0,1]^d\to \RR$ and $J$, let $f_J:[0,1]^d\to \RR$ be defined by $f_J(\bx)=\sum_{\bj\in [\bm{1}: \bm{J}]}  \theta_{\bj} \mathbbm{1}\{ \bx\in I_{\bj}\}$, $\bx\in [0,1]^d$, where $\theta_{\bj}$ is any value between $f((\bj-\bm{1})/J)$ and $f(\bj/J)$. Then $\|f-f_J\|_{p,G^*}\lesssim J^{-1/p}$. 
	Moreover, for some appropriate choices of $\theta_{\bj}$, $\bj\in [\bm{1}: \bm{J}]$, we can ensure that $f\in \mathcal{M}$.  
\end{lemma}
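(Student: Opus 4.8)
The plan is to exploit the coordinatewise monotonicity of $f$ to control the pointwise error on each block $I_{\bj}$ by the ``corner increment'' of $f$ across that block, and then to bound the aggregate of these increments by a staircase-path telescoping argument. Throughout I use that the relevant $f$ is coordinatewise nondecreasing (this is the case in every application of the lemma, e.g. to $f_0\in\mathcal{M}$ or to its block averages), and that such an $f$ on $[0,1]^d$ is automatically bounded with range $f(\bm{1})-f(\bm{0})$.

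First I would fix $\bj\in[\bm{1}:\bJ]$. Every $\bx\in I_{\bj}$ satisfies $(\bj-\bm{1})/J\preceq \bx\preceq \bj/J$, so monotonicity gives $f((\bj-\bm{1})/J)\le f(\bx)\le f(\bj/J)$; since $\theta_{\bj}$ is chosen in the same interval,
\[
|f(\bx)-\theta_{\bj}|\le \Delta_{\bj}:=f(\bj/J)-f((\bj-\bm{1})/J),\qquad \bx\in I_{\bj}.
\]
Hence $\int_{I_{\bj}}|f-f_J|^p\,dG^{\ast}\le \Delta_{\bj}^p\,G^{\ast}(I_{\bj})$, and summing over $\bj$ and using the hypothesis $\max_{\bj}G^{\ast}(I_{\bj})\lesssim J^{-d}$,
\[
\|f-f_J\|_{p,G^{\ast}}^p\le \Big(\max_{\bj}G^{\ast}(I_{\bj})\Big)\sum_{\bj}\Delta_{\bj}^p\lesssim J^{-d}\sum_{\bj}\Delta_{\bj}^p.
\]
It therefore suffices to prove $\sum_{\bj}\Delta_{\bj}^p\lesssim J^{d-1}$.

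This is the crux, and the main obstacle: unlike the univariate case, the corner increments $\Delta_{\bj}$ do not telescope directly. I would decompose each $\Delta_{\bj}$ along a staircase path from $(\bj-\bm{1})/J$ to $\bj/J$ that changes one coordinate at a time. Writing $\bm{p}_{\bj,k}$ for the point whose first $k$ coordinates equal $j_1/J,\dots,j_k/J$ and whose remaining coordinates equal $(j_{k+1}-1)/J,\dots,(j_d-1)/J$, monotonicity makes each term nonnegative and gives $\Delta_{\bj}=\sum_{k=1}^d\big[f(\bm{p}_{\bj,k})-f(\bm{p}_{\bj,k-1})\big]$. For fixed $k$ and fixed values of the remaining indices, summing the $k$-th term over $j_k=1,\dots,J$ telescopes along a coordinate line and is at most $f(\bm{1})-f(\bm{0})$; summing over the $J^{d-1}$ choices of the other indices and then over $k$ yields $\sum_{\bj}\Delta_{\bj}\le d\,J^{d-1}\big(f(\bm{1})-f(\bm{0})\big)$. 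Since moreover each $\Delta_{\bj}\le d\big(f(\bm{1})-f(\bm{0})\big)$ is uniformly bounded and $p\ge 1$, we have $\Delta_{\bj}^p\lesssim\Delta_{\bj}$, so $\sum_{\bj}\Delta_{\bj}^p\lesssim J^{d-1}$. Combining with the previous display gives $\|f-f_J\|_{p,G^{\ast}}^p\lesssim J^{-d}\cdot J^{d-1}=J^{-1}$, i.e. $\|f-f_J\|_{p,G^{\ast}}\lesssim J^{-1/p}$.

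For the final assertion I would take the monotone selection $\theta_{\bj}=f(\bj/J)$ (the choice $f((\bj-\bm{1})/J)$ works equally well). If $\bj_1\preceq\bj_2$ then $\bj_1/J\preceq\bj_2/J$, so $\theta_{\bj_1}=f(\bj_1/J)\le f(\bj_2/J)=\theta_{\bj_2}$; by the convex-cone characterization \eqref{convexcone} of monotone step functions this means the coefficient array lies in $\mathcal{C}$, hence $f_J\in\mathcal{M}_J\subset\mathcal{M}$. The one conceptual point worth emphasizing is the multivariate increment bound $\sum_{\bj}\Delta_{\bj}\lesssim J^{d-1}$: although this sum grows like $J^{d-1}$ rather than being $O(1)$ as when $d=1$, the growth is exactly absorbed by the block-mass bound $\max_{\bj}G^{\ast}(I_{\bj})\lesssim J^{-d}$, which is what leaves the advertised rate $J^{-1/p}$.
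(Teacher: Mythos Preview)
Your proof is correct and follows essentially the same strategy as the paper: bound the pointwise error on each block by the corner increment $\Delta_{\bj}$, then control $\sum_{\bj}\Delta_{\bj}$ by a telescoping argument to get $dJ^{d-1}(f(\bm{1})-f(\bm{0}))$, and finally pass from $p=1$ to general $p$ via the uniform bound on each $\Delta_{\bj}$.

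The only notable difference is in the combinatorics of the telescoping step. The paper partitions $[\bm{1}:\bJ]$ into \emph{diagonal} chains $\{\bj,\bj+\bm{1},\bj+2\cdot\bm{1},\dots\}$, observes that along each such chain the sum of the $\Delta_{\bj}$ telescopes to at most $f(\bm{1})-f(\bm{0})$, and counts the number of chains by their minimal elements (at most $dJ^{d-1}$). You instead decompose each $\Delta_{\bj}$ along a coordinate-by-coordinate staircase, then for each coordinate $k$ sum over $j_k$ first to telescope along axis-parallel lines. Both routes yield the identical bound $\sum_{\bj}\Delta_{\bj}\le dJ^{d-1}(f(\bm{1})-f(\bm{0}))$; your staircase argument is perhaps a touch more transparent since the telescoping is along standard coordinate directions rather than diagonals, while the paper's chain decomposition avoids introducing the intermediate points $\bm{p}_{\bj,k}$. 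For the $p>1$ case the paper writes $\|f-f_J\|_{p,G^\ast}^p\le (f(\bm{1})-f(\bm{0}))^{p-1}\|f-f_J\|_{1,G^\ast}$ directly, whereas you bound $\Delta_{\bj}^p\lesssim\Delta_{\bj}$ before summing; these are equivalent manipulations.
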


\begin{proof}
	For $\theta_{\bj}$ any value between $f((\bj-\bm{1})/J)$ and $f(\bj/J)$, 
	\begin{align*}
		\|f- f_{J}\|_{1,G^*} & = \sum_{\bj} \int_{I_{\bj}} |f-\theta_{\bj}|\d G^* \\ 
		&\leq \sum_{\bj} (f(\bj/J) - f((\bj-\bm{1})/J))G^*(I_{\bj}) \\ 
		&\lesssim J^{-d}\sum_{\bj} (f(\bj/J) - f((\bj-\bm{1})/J)).
	\end{align*}
	To get the upper bound of the summation in the last inequality, we first decompose the index set $[\bm{1}:\bJ]$ in the following way. For every $\bj\in [\bm{1}:\bJ]$, Let $A_{\bj}$ be the largest possible subset of $[\bm{1}:\bJ]$ in the form $\{\ldots, \bj - 2\cdot\bm{1},\bj - \bm{1}, \bj, \bj + \bm{1}, \bj + 2\cdot\bm{1},\ldots\}$, which is a chain with respect to the coordinatewise partial order on the index set. Then we count the number of different $A_{\bj}$. Note that $A_{\bj}$ can be identified by its minimal element. The minimal element of $A_{\bj}$ should satisfy that at least one of its coordinates is $1$, otherwise we can subtract this element by $\bm{1}$ while the smaller element is still in $[\bm{1}:\bm{J}]$, thus should be in $A_{\bj}$, contradicting the fact of the minimal element. The number of different minimal elements is no larger than $d J^{d-1}$, by choosing a coordinate equal to $1$ among all $d$ coordinates and setting the rest ones free in $\{1,\ldots, J\}$. The construction of $A_{\bj}$ gives $\sum_{\bm{l} \in A_{\bj}}(f(\bm{l}/J) - f((\bm{l}-\bm{1})/J)) \leq f(\bm{1}) - f(\bm{0})$. Then we have $\|f- f_{J}\|_{1,G^*}\lesssim J^{-d}(d J^{d-1}(f(\bm{1})-f(\bm{0})))\lesssim J^{-1}$. 
	
	The monotonicity constraint will be maintained by choosing $\theta_{\bj}=\int_{I_{\bj}} f(\bx)d\bx/G(I_{\bj})$ for $\bj\in [\bm{1},\bm{J}]$, or $\theta_{\bj}=f((\bj-\bm{1})/J)$, for instance. 
	
	For $p>1$, note that $\|f- f_{J}\|_{p,G^*}^p \le (f(\bm{1}) - f(\bm{0}))^{p-1} \|f- f_{J}\|_{1,G^*}$. Then the conclusion follows.
\end{proof}

\begin{remark}\rm
    \label{rem:Lp}
    For $p>1$, the $\LL_p$ approximation rate in Lemma~\ref{approximation} may not be improved. 
    To see this, consider $f=\sum_{j=1}^d \Ind\{j: x_j>c_j\}$, where $\bm{c}$ is a fixed vector with irrational coordinates in $[0,1]$. Note that $\bm{c}$ is never on the boundary of any hypercube used for partitioning. Clearly, $f$ is a multivariate monotone function with discontinuity at any $\bx$ that shares a coordinate with $\bm{c}$. Let $\bj^*$ be the index such that $\bm{c}\in I_{\bj^*}$.  and generally for a given $J$, for $k=1,\ldots,d$, $\min \{c_{j^*_k} -(j_k^*-1)/J, j_k^*/J -c_{j^*_k}\}\gtrsim 1/J$. For any hypercube $I_{\bj}$ used in the partition such that $j_k=j^*_k$ for some $k=1,\ldots,d$, there is a jump of size at least $1$ within $I_{\bj}$. Hence, no matter how $\bm{\theta}$ is chosen, $\int_{I_{\bj}} |f-f_J\|_p^p\gtrsim J^{-d}$ for all such hypercubes. The number of hypercubes with this property is of the order $J^{d-1}$, and hence it follows that $\int  |f-f_J\|_p^p\gtrsim J^{-1}$. This shows that the approximation order cannot improve using equispaced knot points to form the hypercubes for the piecewise constant approximation. 
\end{remark}

\begin{remark}\rm 
	\label{rem:G_n satisfies bounds}
	In view of Lemma~\ref{N_j bounds}, if $J^d (\log n)/n\to 0$, then the empirical distribution satisfies the condition  $\max \{G_n(I_{\bj}): \bj\in [\bm{1}: \bm{J}]\}\lesssim J^{-d}$ in probability, and hence $\|f-f_J\|_{1,G_n} \lesssim J^{-d}$, and the implicit constant of proportionality in $\lesssim$ does not depend on $f$. 
\end{remark}

\begin{lemma} 
	\label{sigma}
	Suppose $J$ is deterministic and satisfies $J\to \infty$ and $J^d/n \to 0$. For $\bX$ either deterministic or random, 
	under Assumptions \ref{assumption:predictor}-\ref{assum:prior}, we have
	\begin{enumerate}
		\item[{\rm (i)}] $\hat{\sigma}^2_n$ converges in probability to $\sigma_0^2$ at the rate of $\max\{n^{-1/2}, J^d/n, J^{-1}\}$. 
		\item[{\rm (ii)}] If we endow $\sigma^2$ with an Inverse-Gamma prior IG($\beta_1,\beta_2$) for some $\beta_1>0,\beta_2>0$, $\sigma^2$ contracts around $\sigma^2_0$ as the same rate $\max\{n^{-1/2}, J^d/n, J^{-1}\}$.
	\end{enumerate}
\end{lemma}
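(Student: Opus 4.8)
The plan is to reduce $\hat\sigma_n^2$ to an analysis-of-variance quantity and then track each source of error separately. Write $W_i=Y_i-\zeta_{\bj}$ for the block $\bj$ containing $\bX_i$, so that $\bar Y|_{I_{\bj}}-\zeta_{\bj}=\bar W|_{I_{\bj}}$, and use the elementary identity $\sum_i W_i^2-\sum_{\bj}N_{\bj}(\bar W|_{I_{\bj}})^2=\sum_{\bj}\sum_{i:\bX_i\in I_{\bj}}(W_i-\bar W|_{I_{\bj}})^2$ to rewrite \eqref{sigmasq.mmle} as
\[
n\hat\sigma_n^2=\sum_{\bj\in[\bm{1}:\bJ]}\sum_{i:\bX_i\in I_{\bj}}(W_i-\bar W|_{I_{\bj}})^2+\sum_{\bj\in[\bm{1}:\bJ]}\frac{N_{\bj}\lambda_{\bj}^{-2}}{N_{\bj}+\lambda_{\bj}^{-2}}(\bar W|_{I_{\bj}})^2=:\mathrm{SSW}+R.
\]
The ridge-type correction $R$ is the only extra term compared with the usual within-block residual sum of squares $\mathrm{SSW}$. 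Since $\lambda_{\bj}^{-2}$ is bounded above (Assumption~\ref{assum:prior}), writing $g_i=f_0(\bX_i)-\zeta_{\bj}$ and splitting $(\bar W|_{I_{\bj}})^2\lesssim(\bar g|_{I_{\bj}})^2+(\bar\varepsilon|_{I_{\bj}})^2$, I would bound the signal contribution using $N_{\bj}\lambda_{\bj}^{-2}/(N_{\bj}+\lambda_{\bj}^{-2})\le\lambda_{\bj}^{-2}\lesssim1$ together with boundedness of $\bar g|_{I_{\bj}}$, and the noise contribution using $N_{\bj}\lambda_{\bj}^{-2}/(N_{\bj}+\lambda_{\bj}^{-2})\le N_{\bj}$ together with $\E_0[N_{\bj}(\bar\varepsilon|_{I_{\bj}})^2]=\sigma_0^2$; either way $\E_0 R\lesssim J^d$, so $R/n=O_{\P_0}(J^d/n)$ by Markov's inequality.

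Next I would analyse $\mathrm{SSW}$ by substituting $W_i=g_i+\varepsilon_i$ and expanding $\mathrm{SSW}=A+2B+C$, where $A=\sum_{\bj}\sum_{i}(g_i-\bar g|_{I_{\bj}})^2$, $B$ is the cross term, and $C=\sum_i\varepsilon_i^2-\sum_{\bj}N_{\bj}(\bar\varepsilon|_{I_{\bj}})^2$. The three advertised rates come out of these. For $C$, the central limit theorem gives $n^{-1}\sum_i\varepsilon_i^2=\sigma_0^2+O_{\P_0}(n^{-1/2})$ (invoking a finite fourth moment of $\varepsilon$), while $n^{-1}\sum_{\bj}N_{\bj}(\bar\varepsilon|_{I_{\bj}})^2$ is nonnegative with mean $J^d\sigma_0^2/n$, hence $O_{\P_0}(J^d/n)$ by Markov's inequality. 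The cross term is negligible: since $\sum_{i:\bX_i\in I_{\bj}}(g_i-\bar g|_{I_{\bj}})=0$, one has $B=\sum_i(g_i-\bar g|_{I_{\bj(i)}})\varepsilon_i$ with $\mathrm{Var}(B\mid\bX)=\sigma_0^2 A$, so $B/n=O_{\P_0}((nJ)^{-1/2})$, dominated by $n^{-1/2}$.

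The delicate step, and the one I expect to be the main obstacle, is the sharp $J^{-1}$ bound on $A/n$; a crude bound such as $A\le n\,\max_{\bj}\omega_{\bj}^2$ fails because $\max_{\bj}\omega_{\bj}$ need not vanish (for instance when $f_0$ has jumps), so one must exploit the aggregation across blocks. On each block $\zeta_{\bj}$ is constant and $f_0$ is monotone, whence $|g_i-\bar g|_{I_{\bj}}|\le\omega_{\bj}:=f_0(\bj/J)-f_0((\bj-\bm{1})/J)$ and $A\le\sum_{\bj}N_{\bj}\omega_{\bj}^2$. Using $N_{\bj}\lesssim n/J^d$ uniformly (Assumption~\ref{assumption:predictor} for deterministic design, Lemma~\ref{N_j bounds} for random design) gives $A/n\lesssim J^{-d}\sum_{\bj}\omega_{\bj}^2$. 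I would then reuse the chain-decomposition argument from the proof of Lemma~\ref{approximation}: the index set splits into at most $dJ^{d-1}$ maximal chains in the $\bm{1}$-direction, along each of which the $\omega_{\bj}$ telescope to at most $f_0(\bm{1})-f_0(\bm{0})$, so $\sum_{\bj}\omega_{\bj}\lesssim J^{d-1}$; since also $\max_{\bj}\omega_{\bj}\le f_0(\bm{1})-f_0(\bm{0})$, it follows that $\sum_{\bj}\omega_{\bj}^2\le(\max_{\bj}\omega_{\bj})\sum_{\bj}\omega_{\bj}\lesssim J^{d-1}$ and hence $A/n\lesssim J^{-1}$. Collecting $R$, $A$, $B$, $C$ then yields $\hat\sigma_n^2=\sigma_0^2+O_{\P_0}(\max\{n^{-1/2},J^d/n,J^{-1}\})$, which is part~(i).

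For part~(ii) I would use the explicit posterior \eqref{sigmasq.mpostd}. Reparametrizing, $1/\sigma^2\mid\mathbb{D}_n\sim\mathrm{Gamma}(\beta_1+n/2,\beta_2+n\hat\sigma_n^2/2)$ has posterior mean $(\beta_1+n/2)/(\beta_2+n\hat\sigma_n^2/2)\to 1/\hat\sigma_n^2$ and posterior variance of order $n^{-1}\hat\sigma_n^{-4}$. Chebyshev's inequality under the posterior then gives $1/\sigma^2=1/\hat\sigma_n^2+O_{\P_0}(n^{-1/2})$, and by continuity of $x\mapsto1/x$ away from $0$ (using $\hat\sigma_n^2\to_{\P_0}\sigma_0^2>0$) this transfers to $\sigma^2=\hat\sigma_n^2+O_{\P_0}(n^{-1/2})$ within the posterior. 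As $n^{-1/2}$ is dominated by the rate in part~(i), combining with (i) yields $\E_0\Pi(|\sigma^2-\sigma_0^2|>M_n\max\{n^{-1/2},J^d/n,J^{-1}\}\mid\mathbb{D}_n)\to0$ for every $M_n\to\infty$, which is part~(ii).
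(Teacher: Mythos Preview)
Your proof is correct and follows essentially the same strategy as the paper: decompose $\hat\sigma_n^2-\sigma_0^2$ into a pure-noise piece handled by the law of large numbers, a within-block bias piece controlled by the chain-decomposition argument of Lemma~\ref{approximation}, and prior/ridge corrections of order $J^d/n$, then for part~(ii) transfer the rate to the inverse-gamma posterior. Your organization via the ANOVA identity $n\hat\sigma_n^2=\mathrm{SSW}+R$ and the split $\mathrm{SSW}=A+2B+C$ is cleaner than the paper's direct seven-term expansion, and your explicit Chebyshev argument for part~(ii) replaces the paper's citation of Yoo and Ghosal, but the substantive ingredients are the same; note that both proofs implicitly use a finite fourth moment of $\varepsilon$ for the $n^{-1/2}$ rate on $n^{-1}\sum_i\varepsilon_i^2$, which is not listed among the stated assumptions.
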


\begin{proof} 
    Let $\theta_{0,\bj} = N_{\bj}^{-1}\sum_{i:X_i\in I_{\bj}} f_0(X_i)$.
	By \eqref{sigmasq.mmle}, 
	\begin{align*}
	    \hat{\sigma}^2_n = & \frac{1}{n}\sum_{i=1}^n \varepsilon^2_i + \frac{1}{n}\sum_{i=1}^n(f_0(\bX_i)-\theta_{0,\ceil{\bX_i J}})^2 + \frac{1}{n}\sum_{\bj\in[\bm{1}:\bm{J}]}N_{\bj}(\theta_{0,\bj} - \zeta_{\bj})^2\\
	    & + \frac{2}{n}\sum_{i=1}^n \varepsilon_i(f_0(\bX_i)-\theta_{0,\ceil{\bX_i J}})  + \frac{2}{n}\sum_{\bj\in[\bm{1}:\bm{J}]}N_{\bj}\bar{\varepsilon}|_{I_{\bj}}(\theta_{0,\bj} - \zeta_{\bj})\\
	    & + \frac{2}{n}\sum_{i=1}^n(f_0(\bX_i)-\theta_{0,\ceil{\bX_i J}})(\theta_{0,\ceil{\bX_i J}} - \zeta_{\ceil{\bX_i J}}) \\
	    & - \frac{1}{n}\sum_{\bj\in[\bm{1}:\bm{J}]}\frac{N_{\bj}^2(\theta_{0,\bj} - \zeta_{\bj})^2 + N_{\bj}^2(\bar{\varepsilon}|_{I_{\bj}})^2 + 2N_{\bj}^2\bar{\varepsilon}|_{I_{\bj}}(\theta_{0,\bj} - \zeta_{\bj}) }{N_{\bj} + \lambda_{\bj}^{-2}}\\
	    = & \frac{1}{n}\sum_{i=1}^n \varepsilon^2_i + \frac{1}{n}\sum_{i=1}^n(f_0(\bX_i)-\theta_{0,\ceil{\bX_i J}})^2 + \frac{2}{n}\sum_{i=1}^n \varepsilon_i(f_0(\bX_i)-\theta_{0,\ceil{\bX_i J}})\\
	    & + \frac{2}{n}\sum_{i=1}^n(f_0(\bX_i)-\theta_{0,\ceil{\bX_i J}})(\theta_{0,\ceil{\bX_i J}} - \zeta_{\ceil{\bX_i J}})\\
	    & + \frac{1}{n} \sum_{\bj \in[\bm{1}:\bJ]} \frac{\lambda_{\bj}^{-2}N_{\bj}(\theta_{0,\bj}-\zeta_{\bj})^2}{N_{\bj} + \lambda_{\bj}^{-2}}
	    + \frac{2}{n} \sum_{\bj \in[\bm{1}:\bJ]}
	    \frac{\lambda_{\bj}^{-2}N_{\bj}\bar{\varepsilon}|_{I_{\bj}}(\theta_{0,\bj}-\zeta_{\bj})}{N_{\bj} + \lambda_{\bj}^{-2}}\\
	     & + \frac{1}{n}\sum_{\bj\in[\bm{1}:\bm{J}]}\frac{ N_{\bj}^2(\bar{\varepsilon}|_{I_{\bj}})^2 }{N_{\bj} + \lambda_{\bj}^{-2}}.
	\end{align*}
	Note that $\lambda_{\bj}^{-2}$, $\zeta_{\bj}$ and $f_0$ are all bounded.
	Then we can bound
	$|\hat{\sigma}^2_n - \sigma^2_0|$  up to a constant by
	\begin{equation}\label{formula:sigmasqdecomp}
		\begin{aligned}
			&\big|\frac{1}{n}\sum_{i=1}^n \varepsilon^2_i - \sigma^2_0\big|
			+ \frac{1}{n}\sum_{i=1}^n|f_0(\bX_i)-\theta_{0,\ceil{\bX_i J}}| + \frac{1}{n}\sum_{\bj\in[\bm{1}:\bm{J}]}(\theta_{0,\bj} - \zeta_{\bj})^2\\
			&
			+\frac{1}{n}\big| \sum_{\bj\in[\bm{1}:\bm{J}]}\frac{N_{\bj}\bar{\varepsilon}|_{I_{\bj}}(\theta_{0,\bj} - \zeta_{\bj})}{N_{\bj}+\lambda_{\bj}^{-2}}\big|
			+ \frac{1}{n} \sum_{\bj\in[\bm{1}:\bm{J}]}{N_{\bj}(\bar{\varepsilon}|_{I_{\bj}})^2}.
		\end{aligned}
	\end{equation}
	
	The first term of \eqref{formula:sigmasqdecomp} is $O_{\P_0}(n^{-1/2})$. 
	By the monotonicity of $f_0$, the second term is bounded by $n^{-1}\sum_{\bj\in[\bm{1}:\bJ]}N_{\bj}(f_0(\bj/\bJ) - f_0((\bj-\bm{1})/\bJ))$. 
	By Remark \ref{rem:G_n satisfies bounds}, following the same argument of the proof of Lemma \ref{approximation}, we have the second term is $O_{\P_0}(J^{-1})$ for random $\bX$ and $O(J^{-1})$ for deterministic $\bX$ under Assumption \ref{assumption:predictor}.
	The third term is bounded by a constant multiple of $J^d/n$
	since the hyperparameters $\zeta_{\bj}$ and  $\theta_{0,\bj}$ are bounded.
	Noting that $\E[(\bar{\varepsilon}|_{I_{\bj}})^2|\bX]=\sigma_0^2/N_{\bj}$, by Markov inequality, we know that the last term is $O_{\P_0}( J^d/n)$. 
	For the fourth term, by Cauchy–Schwarz inequality, we have 
	\begin{align*}
	    \big| \sum_{\bj\in[\bm{1}:\bm{J}]}\frac{N_{\bj}\bar{\varepsilon}|_{I_{\bj}}}{N_{\bj}+\lambda_{\bj}^{-2}}(\overline{f_0(\bX)}|_{I_{\bj}} - \zeta_{\bj})\big| \lesssim J^{d/2}\sqrt{\sum_{\bj\in[\bm{1}:\bm{J}]} (\bar{\varepsilon}|_{I_{\bj}})^2}=O_{\P_0}(J^d).
	\end{align*}
	Combine all of the results and the first claim follows.

	Given the first claim, we can prove the second one by following the same proof of Proposition 4.1 (b) of Yoo and Ghosal \cite{Yoo2016}.
\end{proof}

%

\bibliographystyle{plain}
\bibliography{mybib}

\end{document}